\newtheorem{theorem}{Theorem}[section]
\newtheorem{thm}[theorem]{Theorem}
\newtheorem{lemma}[theorem]{Lemma}
\newtheorem{lem}[theorem]{Lemma}
\newtheorem{proposition}[theorem]{Proposition}
\newtheorem{prop}[theorem]{Proposition}
\newtheorem{corollary}[theorem]{Corollary}
\theoremstyle{definition}
\newtheorem{defn}[theorem]{Definition}
\theoremstyle{remark}
\newtheorem{rem}[theorem]{Remark}
\numberwithin{equation}{section}
 \DeclareMathAlphabet{\mathpzc}{OT1}{pzc}{m}{it}
 \newcommand{\M}{\mathcal{M}}
\def\cal{\mathcal}
 \newcommand{\E}{\mathbb{E}}            
 \newcommand{\T}{\mathbb{T}}
 \newcommand{\Ll}{\langle}
 \newcommand{\Rr}{\rangle}
 \newcommand{\N}{\mathbb{N}}
 \newcommand{\R}{\mathbb{R}}
 \newcommand{\Z}{\mathbb{Z}}
 \newcommand{\FF}{\mathcal{F}}
 \newcommand{\PP}{\mathbb{P}}
 \newcommand{\mcl}{\mathcal}
 \newcommand{\Be}{\begin{equation}}
 \newcommand{\Ee}{\end{equation}}
 \newcommand{\Bs}{\begin{split}}
 \newcommand{\Es}{\end{split}}
  \newcommand{\Bes}{\begin{equation*}}
 \newcommand{\Ees}{\end{equation*}}
 \newcommand{\BT}{\begin{thm}}
 \newcommand{\ET}{\end{thm}}
 \newcommand{\Bp}{\begin{proof}}
 \newcommand{\Ep}{\end{proof}}
 \newcommand{\BL}{\begin{lem}}
 \newcommand{\EL}{\end{lem}}
 \newcommand{\BP}{\begin{proposition}}
 \newcommand{\EP}{\end{proposition}}
 \newcommand{\BC}{\begin{corollary}}
 \newcommand{\EC}{\end{corollary}}
 \newcommand{\BR}{\begin{rem}}
 \newcommand{\ER}{\end{rem}}
 \newcommand{\BD}{\begin{defn}}
 \newcommand{\ED}{\end{defn}}
 \newcommand{\BI}{\begin{itemize}}
 \newcommand{\EI}{\end{itemize}}
  \newcommand{\dif}{{\rm d}}
\def\div{   \textbf{div}}
\def\EE{\mathbb{E}}
\def\PP{\mathbb P}
\def\RR{\mathbb{R}} 
\def\cB{\mathcal{B}}
\def\cF{\mathcal{F}}
\def\cL{\mathcal{L}}\def\cM{{\cal M}}
\def\<{\left<}\def\>{\right>}
\def\({\left(}\def\){\right)}
\begin{document}
\title
[\ \ \ \ LDP for Non-linear monotone SPDEs ]{Large deviation principle of occupation measures for Non-linear monotone SPDEs}

\author[R. Wang]{Ran Wang}
\address{School of Mathematical Sciences, University of Science and Technology of China, Hefei, China.} \email{wangran@ustc.edu.cn}

 \author[J. Xiong]{Jie Xiong }
\address{Department of Mathematics, Faculty of Science and Technology,  University of  Macau, Taipa, Macau.}
\email{jiexiong@umac.mo}

\author[L. Xu]{Lihu Xu}
\address{Department of Mathematics, Faculty of Science and Technology,  University of  Macau, Taipa, Macau.}
\email{lihuxu@umac.mo}

\maketitle
\begin{minipage}{140mm}
\begin{center}
{\bf Abstract}
\end{center}

Using  the hyper-exponential recurrence criterion,    a  large deviation principle for the occupation measure  is derived for  a class of non-linear monotone stochastic partial differential equations.    The main results are applied to many concrete SPDEs such
as   stochastic $p$-Laplace equation,  stochastic porous medium equation, stochastic fast-diffusion equation,  and even stochastic real Ginzburg-Landau equation driven by  $\alpha$-stable noises.


\end{minipage}

\vspace{4mm}

\medskip
\noindent
{\bf Keywords}: Stochastic  partial differential equations;   Large deviation principle;   Occupation measure; Strong Feller; Irreducibility; Hyper-exponential recurrence.

\medskip
\noindent
{\bf Mathematics Subject Classification (2000)}: \ { 60H15, 60F10,  60J75}.


\section{Introduction}

The large time asymptotics for stochastic partial differential equations (SPDEs) have been studied in abundant literatures,
see \cite{DRRW, DPZ96,EH01, GM06,Hairer2008, Liu11, Ner08,PeZa07, WangFY} for ergodicity and \cite{Ge14, GLR11} for random attractor. An SPDE is ergodic means that the occupation measures of its solution converge to a unique invariant measure. It is natural to further ask whether the occupation measures satisfy a Donsker-Varadhan's large deviation principle \cite{DV, DZ, FW}, which gives an estimate on the probability that the occupation measures are deviated from the invariant measure.
\vskip0.3cm

Although Freidlin-Wentzell's small noise large deviation principles have been intensively studied for stochastic (partial) differential equations in recent years, see e.g. \cite{DZ,FW, KX96, XuZh09}, there seem only very few papers on  the  large deviations of Donsker-Varadhan type for  large time, see Gourcy's works \cite{Gou1, Gou2} for stochastic Burgers and Navier-Stokes equations and  Jak\u{s}i\`c et al. \cite{JNPS1, JNPS2}   for some dissipative SPDEs.
\vskip0.3cm

Wu \cite{Wu01} gave a criterion of large deviation principle of occupation measures for strong Feller and irreducible Markov processes, in which one needs to check hyper-exponential recurrence. Many techniques such as Bismut-Elworthy-Li formula \cite{ElLi94} and Wang type Harnack inequality  \cite{Wang2007,WangFYbook} have been developed for studying strong Feller property and irreducibility. However, hyper-exponential recurrence is a very strong condition and hard to be verified for stochastic PDEs.
\vskip0.3cm

This paper is devoted to studying LDP of the occupation measure for a family of monotone stochastic (partial) differential equations with some strong coercivity, which include some non-Lipschitz stochastic differential equations,  stochastic porous media equations, stochastic $p$-Laplace equation  see e.g. \cite{LiuJEE, RWZ, WangFY, ZhangX09} and so on. Our approach is via the hyper-exponential recurrence criterion by Wu. On the one hand, thanks to the monotonicity,
strong Feller property and irreducibility are established by coupling techniques \cite{WangFYbook} and Wang type Harnack inequality.  On the other hand, the strong  coercivity ($r>1$ in   (H3)  below) paves a way for us to proving hyper-recurrence condition.
\vskip0.3cm

For real Ginzburg-Landau equation driven by $\alpha$-stable noise, the Freidlin-Wentzell's small noise large deviation does not hold since $\alpha$-stable noise does not have second moment. However, as the time tends to infinity,
the strong coercive nonlinearity $N(x)=x-x^3$ wins the heavy tail effect to produce hyper-exponential recurrence and thus large deviation principle.
\vskip0.3cm

The paper is organized as follows. In Section 2, we   give  the models and main theorems. In Section 3,  the main theorems are applied to some concrete examples of stochastic (partial) differential equations.
 In Section 4,  we  recall the hyper-exponential criterion about the LDP for Markov processes with strong Feller property and  irreducibility. The proofs of the main theorems are given in  Section 5.

\section{The models and results}
Let $(H, \langle \cdot,\cdot\rangle_H, \|\cdot\|_H)$ be a separable Hilbert space and let $(V,\|\cdot\|_V)$ be a Banach space such that $V\subset H$ continuously and densely. Let $V^*$ be the dual space of $V$, it is well known
$$
V\subset H\subset V^*
$$
continuously and densely. If $_{V^*}\langle\cdot, \cdot\rangle_{V}$ denotes the dualization between $V^*$ and $V$, it follows that
$$
_{V^*}\langle z, v\rangle_{V}=
\langle z, v\rangle_{H}\ \ \ \ \text{for all } z\in H, v\in V.
$$
$(V, H, V^*)$ is called a {\it Gelfand triple}. In this paper, we always assume that $V$ is {\it compactly embedded} in $H$. Thus, there exists a constant $\eta>0$ such that
\Be\label{eq compact}
\|x\|_V\ge\eta\|x\|_H\ \ \ \text{for all } x\in V.
\Ee

Consider the following  stochastic differential equation  on $H$
\Be\label{eq SPDE}
\dif X(t)=A(X(t))\dif t+B(X(t))\dif W_t
\Ee
where $\{W_t\}_{t\ge0}$ is a cylindrical $Q$-Wiener process with $Q:=I$ on another separable Hilbert space $(U,\langle \cdot, \cdot\rangle_U)$ and being defined on a complete probability space $(\Omega, \cF,\PP)$ with normal filtration $\{\cF_t\}_{t\ge0}$,    $B$ is  in Hilbert-Schmidt space $\cL_2(U,H)$. {\it
We always assume that the measurable  functions
$A:V\rightarrow V^*$,  $B: V\rightarrow\cL_2(U, H)$
satisfy some conditions  such that  the equation \eqref{eq SPDE} has a unique continuous  solution in some sense.} For example, assume that
 \begin{itemize}
  \item[{\bf (H1)}] (Hemicontinuity) For all $v_1,v_2, v\in V,  \RR\ni s \mapsto {_{V^*}\langle }A(v_1+s v_2), v\rangle_{V}$ is continuous.
  \item[{\bf (H2)}] (Weak monotonicity) There exists $c_0\in \RR$ such that for all $v_1,v_2\in V$,
  $$
  2 _{V^*}\langle A(v_1)-A(v_2), v_1-v_2\rangle_{V}+\|B(v_1)-B(v_2)\|_{\cL_2(U,H)}^2\le c_0 \|v_1-v_2\|_H^2.
  $$
  \item[{\bf (H3)}] (Coercivity) There exist $r>0$ and $c_1,c_3\in \RR,c_2>0$ such that for all $v\in V$,
  $$
  2 _{V^*}\langle A(v), v\rangle_{V}+\|B(v)\|_{\cL_2(U,H)}^2\le c_1-c_2\|v\|_V^{r+1}+c_3\|v\|_H^2.
  $$
  \item[{\bf (H4)}](Boundedness) There exist  $c_4>0$ and $c_5>0$  such that for all $v\in V$,
  $$
  \|A( v)\|_{V*}\le c_4+c_5 \|v\|_V^{r},
  $$
  where $r$ is as in (H3).
  \end{itemize}

\BD
 A continuous $H$-valued  $\FF_t$-adapted process $\{X_t\}_{t\ge0}$ is called a solution of \eqref{eq SPDE}, if
 \Be\label{eq solution 1}
 \EE\left[\int_0^t\left(\|X(s)\|_V^{r+1}+\|X(s)\|_H^2\right)\dif s \right]<\infty, \ \ \  \forall t>0,
 \Ee
 and $\PP$-a.s.
 $$
 X(t)=X(0)+\int_0^tA(X(s))\dif s+\int_0^tB(X(s))\dif W(s),\ \  \forall t\ge0.
 $$
\ED

According to \cite{KR, PR}, under Conditions (H1)-(H4),  for any $X_0\in L^2(\Omega\rightarrow H;\cF_0;\PP)$, \eqref{eq SPDE} admits  a unique solution $\{X_t\}_{t\ge0}$.  Moreover, we have the following It\^o formula
\begin{align*}
\|X_t\|_H^2=&\|X_0\|_H^2+\int_0^t\left(2 _{V^*}\langle A(X_s), X_s\rangle_V+\|B(X_s)\|_{\cL_2(U,H)}^2 \right)\dif s\notag\\
&+2\int_0^t \langle X(s), B(X(s))\dif W(s)\rangle_H.
\end{align*}
Let $X_t^x$ be the solution of \eqref{eq SPDE} starting from $x$. Consider
the  associated transition semigroup
$$
P_t F(x):=\EE[F(X_t^x)],\ \ \ F\in \cB_b(H), \ x\in H, \ t>0,
$$
where $\cB_b(H)$ is the class of all bounded Borel measurable functions on $H$.
Throughout this paper, we always assume that the Markov semigroup  $P_t$ is strong Feller and irreducible in $H$, that is
\begin{itemize}
  \item[(a)] $P_t(\cB_b(H))\subset C_b(H)$ for all $t>0$ ({\it Strong Feller Property}),
  \item[(b)] $P_t1_U(x)>0$ for each $t>0, x\in H$ and any nonempty open set $U\subset H$ ({\it Irreducibility}).
\end{itemize}
There have been abundant literatures on  the study of strong Feller and irreducibility for SPDEs, see e.g. \cite{DPZ96}, \cite{PZ},  \cite{WangFYbook} and so on.

\vskip0.3cm
Let $\mathcal L_t$ be the occupation measure of the system \eqref{eq SPDE}  given by
\begin{equation}\label{e:occupation}
\mathcal L_t(A):=\frac1t\int_0^t\delta_{X_s}(A)\dif s \ \ \  \ \text{ for any measurable set } A,
\end{equation}
where $\delta_a$ is the Dirac measure at $a$. Then $\cL_t$ is in  $\mathcal M_1(H)$, the space of  probability measures on $H$. On  $\mathcal M_1(H)$, let  $\sigma(\mathcal M_1(H), \mathcal B_b (H))$ be the $\tau$-topology  of converence against measurable and bounded functions  which is much stronger than the usual weak convergence topology $\sigma(\mathcal M_1(H), C_b(H))$, where
  $C_b(H)$ is the space of all bounded continuous  functions on $H$.  See \cite{DV} or \cite[Section 6.2]{DZ}.
\vskip0.3cm

  Now, we are at the position to state our main results.


\begin{theorem}\label{thm main} Assume that (H3) holds with $r>1$ and   $P_t$ is strong Feller and irreducible in $H$.  Then the family $\PP^{\nu}(\mathcal L_T\in \cdot)$ as $T\rightarrow +\infty$ satisfies the large deviation principle  on $(\mathcal M_1(H),\tau)$, with speed $T$ and rate function $J$ defined by \eqref{rate func} below, uniformly for any initial measure $\nu$ in $\mathcal M_1(H)$.
\end{theorem}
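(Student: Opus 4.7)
The strategy is to invoke Wu's criterion reviewed in Section~4: given strong Feller and irreducibility of $P_t$, both of which are already in the theorem's hypotheses, the Donsker--Varadhan LDP in the $\tau$-topology, uniformly over initial distributions, follows once one verifies the \emph{hyper-exponential recurrence} to a compact set, namely that for every $\lambda > 0$ there exists a compact $K_\lambda \subset H$ with
\[
\sup_{x \in H}\EE^x\bigl[\exp(\lambda \tau_{K_\lambda})\bigr] < \infty, \qquad \tau_{K_\lambda} := \inf\{t \geq 0 : X_t \in K_\lambda\}.
\]
All effort is therefore directed at this single condition, and the role of the hypothesis $r>1$ is to supply a super-quadratic dissipation strong enough to deliver it.

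The plan is to take $V(x) = 1 + \|x\|_H^2$ as a Lyapunov function. Applying It\^o's formula to $\|X_t\|_H^2$ and invoking (H3) yields
\[
d\|X_t\|_H^2 \leq \bigl(c_1 - c_2\|X_t\|_V^{r+1} + c_3\|X_t\|_H^2\bigr)\, dt + dM_t,
\]
and combining this with \eqref{eq compact} and $r+1>2$ one sees that the dissipative term $-c_2\|X_t\|_V^{r+1}$ dominates $c_3\|X_t\|_H^2$ for large $\|X_t\|_H$. A differential-inequality comparison, after taking expectations and using Jensen, then gives uniform moment bounds $\sup_{t \geq t_0}\EE^x\|X_t\|_H^{2p} \leq C_p$ for any $p\geq 1$, with $t_0$ depending only mildly on $\|x\|_H$. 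Next, for each prescribed $\lambda>0$, I would choose $R=R(\lambda)$ large enough that $c_2\|v\|_V^{r+1} - c_3\|v\|_H^2 - c_1 \geq \lambda\|v\|_H^2 + b$ whenever $\|v\|_V > R$; the drift estimate then reads $\cL V \leq -\lambda V + b\,\mathbf{1}_{K_R}$ with $K_R := \{v \in V : \|v\|_V \leq R\}$, which is compact in $H$ by the compact embedding. The continuous-time Meyn--Tweedie/Down--Meyn--Tweedie drift argument then produces $\EE^x[e^{\lambda \tau_{K_R}}] \leq C\,V(x)$.

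The last and most delicate step is to upgrade this $V(x)$-weighted bound to a \emph{uniform}-in-$x$ exponential moment of the return time, which is what Wu's criterion actually requires. I would do this through the Markov property: from the uniform moment bound above, one extracts a deterministic time $\sigma$ after which, for every starting $x$, the law of $X_\sigma$ concentrates in a fixed bounded subset of $H$ with overwhelming probability; splitting $\tau_{K} \leq \sigma + \tau_K\circ\theta_\sigma$ and integrating the Lyapunov-weighted bound against $\PP^x(X_\sigma \in \cdot)$ converts it into a uniform bound over a slightly enlarged compact $K$. This promotion to uniformity is the main obstacle, since ordinary geometric drift produces only weighted exponential moments; it is precisely the super-quadratic coercivity ($r>1$, not merely $r>0$) that allows the erasure of the $V(x)$ prefactor and hence the verification of hyper-exponential recurrence.
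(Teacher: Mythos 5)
Your proposal is correct in outline and rests on the same two pillars as the paper: Wu's criterion (Theorem \ref{thm Wu}) reduces everything to the hyper-exponential recurrence \eqref{condition 2}, and the super-quadratic coercivity $r>1$ is what makes the recurrence bound uniform over \emph{all} initial measures. Where you diverge is in how the recurrence estimate is produced. The paper never writes a drift inequality $\cL V\le -\lambda V+b\mathbf{1}_{K}$: it first shows (Lemma \ref{lem expect bound}, via It\^o, Jensen, and the comparison ODE $g'\le C_1-C_2g^{(r+1)/2}$ of Lemma \ref{lem p}, whose solution comes down from infinity because $(r+1)/2>1$) that $\sup_{t\ge1}\EE\|X_t\|_H^2\le C$ and hence $\EE\|X_{t_n}\|_V\le C$ at some $t_n\in[2n-1,2n]$, \emph{with $C$ independent of the initial law}; it then gets $\PP^{\nu}(\tau_M>n)\le (C/M)^n$ by a bare Markov--Chebyshev iteration along the discrete times $t_n$ (Theorem \ref{thm hyper-exp}), which is uniform from the start and needs no removal of a Lyapunov weight. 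Your Meyn--Tweedie route ($\EE^x[e^{\lambda\tau_K}]\le CV(x)$ followed by integrating against $\PP^x(X_\sigma\in\cdot)$ using the uniform second-moment bound) is a legitimate alternative and arrives at the same place, but note three points you should tighten: (i) the time $t_0$ after which the moment bound holds can and must be taken \emph{independent} of $x$ (this is exactly the coming-down-from-infinity property; ``depending only mildly on $\|x\|_H$'' is not enough for the final uniformity); (ii) in the variational framework the solution lies in $V$ only for a.e.\ $t$, so the continuous-time hitting time of the $V$-ball $K_R$ and the associated supermartingale argument require some care --- the paper deliberately sidesteps this by working only with the discrete sequence $t_n$; (iii) Wu's condition also demands $\sup_{x\in K}\EE^x[e^{\lambda\tau_K^{(1)}}]<\infty$ with $\tau_K^{(1)}=\inf\{t\ge1: X_t\in K\}$, which you omit, though it follows from your uniform bound by one further application of the Markov property at time $1$ (or, as in the paper, by taking the hitting time over integers $k\ge1$ so that it dominates both $\tau_K$ and $\tau_K^{(1)}$). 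With these repairs your argument is sound and essentially equivalent in content.
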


\begin{theorem}\label{thm main 01}  Assume that (H3)   holds  with $r\in (0,1]$ and $c_3<0$,   $P_t$ is strong Feller and irreducible in $H$, and  $C_B:=\sup_{u\in H}\|B(u)\|_{L^2(U, H)}^2<\infty$. Let $\lambda_0\in (0,-\frac{c_3}{2C_B})$ and
$$
\cM_{\lambda_0, L}:=\left\{\nu\in\cM_1(H):\int_H e^{\lambda_0\|x\|_H^2}\nu(\dif x)\le L \right\}.
$$
Then the family $\PP^{\nu}(\mathcal L_T\in \cdot)$ as $T\rightarrow +\infty$ satisfies the large deviation principle on $(\mathcal M_1(H),\tau)$, with speed $T$ and rate function $J$ defined by \eqref{rate func} below, uniformly for any initial measure $\nu$ in $
\cM_{\lambda_0, L}$.

\end{theorem}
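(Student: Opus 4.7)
The plan is to apply the hyper-exponential recurrence criterion of Section 4. Since strong Feller and irreducibility are hypotheses, the task reduces to verifying that for every $\lambda>0$ there exist a compact set $K\subset H$ and a time $t_0>0$ such that $\sup_{\nu\in\cM_{\lambda_0,L}}\EE^\nu e^{\lambda\tau_K^{t_0}}<\infty$, where $\tau_K^{t_0}:=\inf\{t\ge t_0:X_t\in K\}$. Because $r\le 1$ the strong coercivity used for Theorem \ref{thm main} is unavailable, so the linear dissipation encoded in $c_3<0$ together with the uniform bound $C_B<\infty$ must carry the argument.

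The core step is an exponential Lyapunov estimate: applying It\^o's formula to $V_\mu(x):=e^{\mu\|x\|_H^2}$ for $\mu\in(0,-c_3/(2C_B))$, and using (H3) together with $\|B^*(x)x\|_U^2\le C_B\|x\|_H^2$, the drift of $V_\mu(X_t)$ is bounded by
\begin{equation*}
V_\mu(X_t)\left[\mu c_1-c_2\mu\|X_t\|_V^{r+1}+\mu(c_3+2\mu C_B)\|X_t\|_H^2\right].
\end{equation*}
Setting $\beta:=-\mu(c_3+2\mu C_B)>0$, this serves two purposes. First, with $\mu=\lambda_0$, combining the elementary inequality $ye^{\lambda_0 y}\ge(e^{\lambda_0 y}-1)/\lambda_0$ with Gronwall yields $\sup_{t\ge 0}\EE^\nu V_{\lambda_0}(X_t)\le C(L)$ uniformly over $\nu\in\cM_{\lambda_0,L}$, so the class $\cM_{\lambda_0,L}$ is forward-invariant up to a uniform constant. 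Second, for any prescribed $\lambda>0$, choose $R$ so large that $\beta R^2>\lambda+\mu c_1$; then on $\{\|x\|_H>R\}$ the drift is dominated by $-\lambda V_\mu$, and optional stopping of $e^{\lambda t}V_\mu(X_t)$ at the hitting time $\tau_{B_R}$ of the closed $H$-ball $B_R$ gives
\begin{equation*}
\EE^x e^{\lambda\tau_{B_R}}\le e^{\mu(\|x\|_H^2-R^2)}, \qquad x\in B_R^c,
\end{equation*}
which integrates to a uniform bound over $\nu\in\cM_{\lambda_0,L}$.

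To upgrade the $H$-ball $B_R$ to an honest compact subset of $H$, I would exploit the compact embedding $V\hookrightarrow H$. It\^o on $\|X_t\|_H^2$ with (H3) gives $c_2\EE^x\int_0^T\|X_s\|_V^{r+1}\dif s\le\|x\|_H^2+c_1T+|c_3|\int_0^T\EE^x\|X_s\|_H^2\dif s$, so the uniform $H$-moment bound above forces the process to visit $K_M:=\{v\in V:\|v\|_V\le M\}$ with positive time-density for $M$ large; by \eqref{eq compact} $K_M$ is compact in $H$. Combining this with strong Feller and irreducibility (which together supply a uniform minorization of $P_{t_0}(x,\cdot)$ on $B_R$ for some $t_0>0$), the exponential recurrence to $B_R$ lifts to exponential recurrence with $t_0$-delay to the compact set $K\supset\overline{B_R\cap K_M}$, and Wu's criterion then applies. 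The main obstacle is precisely this compactness step: for $r>1$ a $V$-norm Lyapunov function works directly, but under only linear dissipation compactness must enter through the embedding combined with the smoothing action of $P_{t_0}$; the class $\cM_{\lambda_0,L}$ is exactly what is needed to make the exponential-moment bounds, and hence the LDP, uniform in the initial distribution.
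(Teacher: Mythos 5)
Your reduction to Wu's criterion and your exponential Lyapunov estimate for $e^{\mu\|x\|_H^2}$ are sound, and they do yield hyper-exponential recurrence to the balls $B_R=\{x:\|x\|_H\le R\}$, uniformly over $\cM_{\lambda_0,L}$. The genuine gap is the step you yourself flag as ``the main obstacle'': $B_R$ is not compact in $H$, and the proposed upgrade to a compact set does not work as sketched. The It\^o estimate $c_2\,\EE^x\int_0^T\|X_s\|_V^{r+1}\dif s\le \|x\|_H^2+c_1T$ is only a \emph{first}-moment bound; converting ``positive time-density of visits to $K_M$'' into $\sup_{x\in B_R}\EE^x e^{\lambda\tau_{K_M}}<\infty$ for \emph{every} $\lambda>0$ requires exponential, not polynomial, control of the occupation time outside $K_M$. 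If you try to iterate over unit time intervals via the Markov property, the conditional expectation $\EE^\nu\bigl[\|X_{n-1}\|_H^2\mid \tau_{K_M}>n-1\bigr]$ is only bounded by $C/\PP^\nu(\tau_{K_M}>n-1)$, so the geometric decay does not close. Likewise, strong Feller plus irreducibility give a minorization on \emph{compact} sets, not on the non-compact ball $B_R$ of an infinite-dimensional $H$, so that route does not rescue the argument either.

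The paper closes exactly this gap by keeping the $V$-dissipation \emph{inside} the exponential: applying It\^o's formula to $\exp\bigl(\lambda_0(\|X_t\|_H^2+\tfrac{c_2}{2}\int_0^t\|X_s\|_V^r\dif s)\bigr)$ and using $\lambda_0<-c_3/(2C_B)$ to absorb the quadratic variation term, one obtains the supermartingale bound
$$
\EE^x\Bigl[\exp\Bigl(\tfrac{\lambda_0c_2}{2}\int_0^t\|X_s\|_V^r\dif s\Bigr)\Bigr]\le e^{\lambda_0c_1t}\,e^{\lambda_0\|x\|_H^2}.
$$
With $K=\{x:\|x\|_V\le M\}$, compact in $H$ by the compact embedding, the event $\{\tau_K^{(1)}>n\}$ forces $\cL_n(\|\cdot\|_V^r)\ge M^r(1-1/n)$, and a single Chebyshev application of the displayed exponential moment gives $\PP^\nu(\tau_K^{(1)}>n)\le L\,e^{-n\lambda_0(c_2M^r/4-c_1)}$ uniformly over $\nu\in\cM_{\lambda_0,L}$, which beats $e^{\lambda n}$ once $M$ is large. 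You should replace your two-stage scheme ($H$-ball first, compact set second) by this one-shot estimate; your $e^{\mu\|x\|_H^2}$ computation is essentially the shadow of it in which the $-c_2\|X_t\|_V^{r+1}$ term has been discarded rather than harvested.
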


\begin{rem} For every $f:H\rightarrow \R$ measurable and bounded, as $\nu\rightarrow\int_{H}f\dif \nu$ is continuous with respect to (w.r.t.) the $\tau$-topology, then by our main results and the contraction principle (\cite[Theorem 4.2.1]{DZ}), $$\mathbb P^{\nu}\left(\frac{1}{T}\int_0^T f(X_s)\dif s\in \cdot\right)$$
satisfies the LDP on $\R$,   with the rate function given by
$$
J^f(r)=\inf\left\{J(\nu)<+\infty|\nu\in\mathcal M_1(H)\  \text{and } \int f\dif\nu=r \right\},\ \ \forall r\in\R.
$$
\end{rem}

\section{Examples}
In this section, we apply Theorem  \ref{thm main} and Theorem \ref{thm main 01} to some concrete examples of stochastic (partial) differential equations.

\subsection{Non-Lipschitz stochastic differential equations \cite{ZhangX09,RWZ}}

Consider the following stochastic differential equation
\Be\label{eq SDE}
\dif X_t=b(X_t) \dif t+\sigma(X_t)\dif W_t, \ \ \  X_0=x\in\RR^d,
\Ee
where $b:\RR^d\rightarrow\RR^d$ and $\sigma:\RR^d\rightarrow\RR^d\times\RR^d$ are continuous functions and $\{W_t\}_{t\ge0}$ is a $d$-dimensional standard Brownian motion defined on some complete probability space $(\Omega, \cF, (\cF_t)_{t\ge0}, \PP)$.

Let $\langle\cdot, \cdot\rangle$ denote the inner product in $\RR^d$, $|\cdot|$ the norm in $\RR^d$, and $\|\cdot\|_2$ the Hilbert-Schmit norm from $\RR^d$ to $\RR^d$. Assume that the continuous coefficients $b$ and $\sigma$ satisfy the following conditions:
\begin{itemize}
  \item[({\bf A1})](Monotonicity) There exists a $\lambda_0\in\RR$ such that for all $x,y\in\RR^d$,
  \begin{align*}
  2\langle x-y, b(x)-b(y)\rangle+\|\sigma(x)-\sigma(y)\|_2^2\le \lambda_0|x-y|^2(1\vee \log |x-y|^{-1}).
  \end{align*}
  \item[({\bf A2})](Growth of $\sigma$) There exists a $\lambda_1>0$ such that for all $x\in\RR^d$,
  $$
  \|\sigma(x)\|_2\le \lambda_1(1+|x|).
  $$
   \item[({\bf A3})](Non-degeneracy of $\sigma$) For some $\lambda_2>0$,
   $$
\sup_{x\in\RR^d}  \|\sigma^{-1}(x)\|_2\le \lambda_2.
   $$
  \item[({\bf A4})](One-side growth of $b$) There exist  $p>2$ and constants $\lambda_3>0, \lambda_4\ge0$ such that for all $x\in\RR^d$,
      $$
      2\langle x, b(x)\rangle+\|\sigma(x)\|_2^2\le -\lambda_3|x|^p+\lambda_4.
        $$
\end{itemize}

According to  Zhang \cite{ZhangX09}, under ({\bf A1})-({\bf A4}),  the equation \eqref{eq SDE} has a unique continuous strong solution $X_t$, whose semigroup $P_t$ is strong Feller and irreducible.

Take $V=H=\RR^d$. Then  (H3) automatically holds in Theorem \ref{thm main} with $r>1$.  Hence,    the assertion in Theorem \ref{thm main}  holds for  the solution to \eqref{eq SDE}.

\vskip0.3cm

\BR
Ren et al. \cite{RWZ} established the strong Feller property and irreducibility for the non-Lipschitz multivalued stochastic differential equation under the similar conditions. Thus,  the assertion in Theorem \ref{thm main} also holds for the  non-Lipschitz multivalued stochastic differential equation studied in \cite{RWZ}.

\ER

\subsection{Stochastic $p$-Laplace equation \cite{PR, LiuJMAA}}
Let $\Lambda$ be an open bounded domain in $\RR^d$ with smooth boundary. Consider the following Gelfand triple
$$
H_0^{1,p}(\Lambda)\cap L^q(\Lambda)\subset L^2(\Lambda)\subset
(H_0^{1,p}(\Lambda)\cap L^q(\Lambda))^*
$$
and the stochastic $p$-Laplace equation
\begin{equation}\label{e: p-Laplace}
\dif X_t=\left[\div(|\nabla X_t|^{p-2}\nabla X_t)-\gamma|X_t|^{q-2}X_t\right]\dif t+B\dif W_t, \ \ X_0=x,
\end{equation}
where $\max\{1, 2d/(d+2)\}<p\le 2< q$ and $\gamma>0$, $B$ is a Hilbert-Schmidt operator on $L^2(\Lambda)$ and $W_t$ is a cylindrical Wiener process on $L^2(\Lambda)$ w.r.t. a complete filtered probability space $(\Omega, \mathcal F, (\mathcal F_t)_{t\ge0}, \PP)$.

On the Sobolev space $H_0^{1,p}(\Lambda)$, consider the norm
$$
\|u\|_{1,p}:=\left(\int_{\Lambda}|\nabla u(\xi)|^p\dif \xi\right)^{1/p}.
$$
Since $\Lambda$ is bounded, by Poincar\'e inequality we know that $\|\cdot\|_{1,p}$ is equivalent to the classical Sobolev norm in $H_0^{1,p}(\Lambda)$. We denote the norm in $L^q(\Lambda)$ by $\|\cdot\|_q$ and the inner product in $L^2(\Lambda)$ by $\langle \cdot, \cdot\rangle_{L^2(\Lambda)}$.
 According to the Rellich-Kondrachov theorem, the embedding $H_0^{1,p}(\Lambda)\subset L^2(\Lambda)$ is compact.

Take $V:=
H_0^{1,p}(\Lambda)\cap L^q(\Lambda), H:= L^2(\Lambda)$.   According to \cite[Example 4.1.9]{PR}, Conditions (H1)-(H4) hold for \eqref{e: p-Laplace} with $r>1$.

Assume that $B$ is non-degenerate, that is, $Bx=0$ implies that $x=0$. We define the following intrinsic metric for $x\in H_0^{1,p}(\Lambda)$,
\begin{equation*} \|x\|_B:=
  \begin{cases}
    \|y\|_2 & \text{if } \ y\in L^2(\Lambda), By=x; \\
    +\infty & \text{otherwise.}
  \end{cases}
 \end{equation*}
If  there exist constants $\sigma\ge\frac 4p$ and $\delta>0$ such that
\begin{equation}\label{eq p-Lap Condition}
\|x\|^2_{1,p}\cdot\|x\|_2^{\sigma-2}\ge\delta\|x\|_B^{\sigma}, \ \  \ \ \forall x\in H_0^{1,p}(\Lambda),
 \end{equation}
 then  the associated Markov semigroup $P_t$ is strong Feller and irreducible on $L^2(\Lambda)$ by \cite[Theorem 1.2, Theorem 1.3]{LiuJMAA}. Hence,  the assertion in Theorem \ref{thm main} holds for  the solution to  \eqref{e: p-Laplace}.

\BR  According to \cite[Example 3.3]{LiuJEE}, when $p> 2$ and $q\in [1,p]$ in  \eqref{e: p-Laplace}, $P_t$ is  strong Feller,  and furthermore if $d=1$ and $B:=(-\Delta)^{-\theta}$ with $\theta\in (1/4,1/2]$, then $P_t$ is irreducible.    Hence, the assertion in Theorem \ref{thm main}   holds for  the solution to  \eqref{e: p-Laplace}.

 \ER

\vskip 0.3cm
\subsection{Stochastic generalized porous media equations \cite{DRRW, RRW}}\label{sec porous media}
Let $D\subset\RR^d$ be a bounded domain with smooth boundary, $\mu$ be the normalized Lebesgue measure on $D$. Let $\Delta$ be the Dirichlet Laplace on $D$ and $L:=-(-\Delta)^{\gamma}$ for $\gamma>0$.  For $r>1$, consider the Gelfand triple
$$
L^{r+1}(D, \mu)\subset H^{ \gamma}(D,\mu)\subset (L^{r+1}(D, \mu))^*,
$$
where $H^{\gamma}(\mu)$ is the completion of $L^2(\mu)$ under the norm
$$
|x|:=\left(\int_D|-(\Delta)^{-\gamma/2}x|^2\dif \mu \right)^{1/2}, \ \ \ x\in L^2(D, \mu),
$$
and $(L^{r+1}(D, \mu))^*$ is the dual space of $L^{r+1}(D, \mu)$ w.r.t. $H^{\gamma}(D, \mu)$. Let $V:=L^{r+1}(D, \mu)$ and $H:= H^{ \gamma}(D,\mu)$.  Notice that $L^{r+1}(D, \mu)$ is compactly embedded  in $H^{ \gamma}(D,\mu)$.

Let
$$
0<\lambda_1\le \lambda_2\le \cdots\lambda_n\rightarrow+\infty
$$
be the eigenvalues of $-\Delta$ including multiplicities with unite eigenfunctions $\{e_j\}_{j\ge1}$.
For  $r>1, c\ge0, q>1/2$,

\begin{equation}\label{e: function Porous}
\Psi(s):=s|s|^{r-1},\ \Phi(s):=c s,\ B(x)e_j:=b_i(x)j^{-q}e_j, \ j\ge1,
\end{equation}
where $\{b_j\}_{j\ge1}$ satisfies that
\begin{equation}\label{e: condition Porous}
    \begin{cases}  |b_i(u)-b_i(v)|\le b|u-v|, \ \ \  u, v\in H^{ \gamma}(D,\mu), \\
     \inf_{u\in H^{ \gamma}(D,\mu)}\inf_{i\ge1} b_i(u)>0. \\
                        \end{cases} \end{equation}

Consider the equation
\Be\label{e: Porous medium}
\dif X(t)=\left(L \Psi(X(t))+\Phi(X(t)) \right)\dif t+B(X(t))\dif W(t),
\Ee
where $W$ is a cylindrical Brownian motion on $H^{\gamma}(D,\mu)$ w.r.t. a complete filtered probability space $(\Omega, \cF, \{\cF_t\}_{t\ge0},\PP)$.
   According to \cite[Example 4.1.11]{PR},   Conditions (H1)-(H4) hold for system \eqref{e: Porous medium} and the constant $r>1$ in (H3).   If \eqref{e: condition Porous} holds and $\gamma\ge dp$, the Markov semigroup $P_t$  is  strong Feller by \cite[Example 6.1]{WangFY2015Nonlinear} or \cite[Example 3.3]{ZhangSQ}, and $P_t$ is irreducible by \cite[Example 3.3]{ZhangSQ}. Hence, the assertion in Theorem \ref{thm main}  holds for  the solution to  \eqref{e: Porous medium}.

\subsection{Stochastic fast-diffusion equations} As references for this equation, we refer e.g. to \cite{RRW, LiuWang}.

Let $D=(0,1)\subset \RR$ and let $\mu, H^{\gamma}, L, \Psi, \Phi, B$  be as in  section \ref{sec porous media} with $1/3<r<1$,  $\gamma=1$, $c<0$ and $q$ to be determined later. Let $V:=L^{r+1}(D,\mu)\bigcap H^1(D, \mu)$ with
$$
\|v\|_V=|v|_{L^{r+1}}+|v|, \  \ v\in V.
$$
We consider the equation \eqref{e: Porous medium} under the triple
$$
V\subset H^1(D,\mu)\subset V^*.
$$
According to \cite[Theorem 3.9]{RRW},  (H1)-(H4) hold with $1/3<r<1$ and  $c_3<0$.
Furthermore, for all $\theta \in \left(\frac{4}{r+1},\frac{6r+2}{r+1}\right)$  and $q\in \left(\frac12, \frac{3r+1}{\theta(r+1)}\right)$, the Markov semigroup $P_t$  is  strong Feller and irreducible  by \cite[Example 3.4]{ZhangSQ}.
Hence, the assertion in Theorem \ref{thm main 01}   holds for   stochastic fast-diffusion equations.

\subsection{Stochastic real Ginzburg-Landau equation driven by $\alpha$-stable noises  \cite{Xu13}}
Let $\T= \R/\Z$ be equipped with the usual Riemannian metric, and let $\dif \xi$
denote the Lebesgue measure on $\T$. For any $p\ge1$, let
$$
L^p(\T;\R):=\left\{x: \T\rightarrow\R; \|x\|_{L^p}:=\left(\int_\T |x(\xi)|^p \dif\xi\right)^{\frac1p}<\infty\right\}.
 $$
Denote
$$H:=\bigg\{x\in L^2(\T; \R); \int_\T x(\xi) \dif\xi =0\bigg\},$$
it is a separable real Hilbert space with inner product
$$\Ll x,y \Rr_H:=\int_\T x(\xi)y(\xi) \dif\xi,\ \ \ \ \ \forall \ x, y \in H.$$
For any $x\in H$, let
$$
\|x\|_H:=\|x\|_{L^2}=\left(\langle x,x\rangle_H\right)^{\frac12}.
$$
Let $\Z_*:=\Z \setminus \{0\}$. It is well known that $\left\{e_k; e_k=e^{i2 \pi k\xi}, \ k \in \Z_*\right\}$ is an orthonormal basis of $H$. For each $x \in H$,
it can be represented by  Fourier series
$$x=\sum_{k \in \Z_*} x_k e_k  \ \ \ \ {\rm with} \ \ \ x_k \in \mathbb C, \ x_{-k}=\overline{x_k}.$$

Let $\Delta$ be the Laplace operator on $H$. It is well known that
$D(\Delta)=H^{2,2}(\T) \cap H$. In our setting, $\Delta$ can be determined by the following
relations: for all $k \in \Z_*$,
$$\Delta e_k=-\gamma_k e_k\ \ \ \ {\rm with} \ \ \gamma_k=4 \pi^2 |k|^2,$$
with
$$H^{2,2}(\T) \cap H=\left\{x \in H; \ x=\sum_{k \in \Z_*} x_k e_k, \ \sum_{k \in \Z_*} |\gamma_k|^{2} |x_k|^2<\infty\right\}.$$
Denote
$$A=-\Delta, \ \ \ \ D(A)=H^{2,2}(\T) \cap H.$$
Define the operator $A^{\sigma}$ with $\sigma \ge 0$ by
$$A^\sigma x=\sum_{k \in \Z_*} \gamma_k^{\sigma} x_ke_k, \ \ \ \ \ \ x \in D(A^\sigma),$$
where $\{x_k\}_{k \in \Z_*}$ are the Fourier coefficients of $x$, and
$$D(A^\sigma):=\left\{x \in H: \ x=\sum_{k \in \Z_*} x_k e_k, \sum_{k \in \Z_*} |\gamma_k|^{2 \sigma} |x_k|^2<\infty\right\}.$$

Given $x \in D(A^\sigma)$, its norm is
$$\|A^\sigma x\|_H:=\left(\sum_{k \in \Z_*} |\gamma_k|^{2 \sigma} |x_k|^2\right)^{1/2}.$$
For $\sigma>0$, let
$$
H_{\sigma}:=D(A^\sigma), \ \  \ \  \|x\|_{H_{\sigma}}:=\|A^\sigma x\|_H.$$
Then, $H_{\sigma}$ is densely and compactly embedded in $H$.  Particularly, let
$$V:=D(A^{1/2}).$$

We shall study  $1$D stochastic Ginzburg-Landau equation on $\T$ as the following
\begin{equation} \label{e:XEqn}
\begin{cases}
\dif X_t + A X_t\dif t= N(X_t) \dif t + \dif L_t, \\
X_0=x,
\end{cases}
\end{equation}
where
\begin{itemize}
\item[(i)] the nonlinear term $N$ is defined by
\begin{equation*} \label{e:NonlinearB}
N(u)= u-u^3 , \ \ \ \ \ u \in H.
\end{equation*}
\item[(ii)] $L_t=\sum_{k \in \Z_*} \beta_k l_k(t) e_k$ is an $\alpha$-stable process on $H$ with
$\{l_k(t)\}_{k\in \Z_*}$ being i.i.d. 1-dimensional symmetric $\alpha$-stable process sequence with $\alpha>1$. Moreover, we assume that there exist some $C_1, C_2>0$ so that $C_1 \gamma_k^{-\beta} \le |\beta_k| \le C_2 \gamma_k^{-\beta}$ with $\beta>\frac 12+\frac 1{2\alpha}$.
\end{itemize}

\BD We say that a predictable $H$-valued stochastic process $X=(X_t^x)$ is a mild solution to Eq. \eqref{e:XEqn} if, for any $t\ge0, x\in H$, it holds ($\mathbb P$-a.s.):
\begin{equation}\label{e: mild solution}
X^x_t(\omega)=e^{-At} x+\int_0^t e^{-A(t-s)} N(X_s^x(\omega))\dif s+\int_0^t e^{-A(t-s)} \dif L_s(\omega).
\end{equation}
\ED

The following  properties for the solutions   can be found in \cite{Xu13, WXX}.
\begin{prop}[\cite{Xu13, WXX}] \label{Thm Xu 13} Assume that $\alpha \in (3/2,2)$ and $\frac 12+\frac{1}{2\alpha}<\beta<\frac 32-\frac{1}{\alpha}$, the following statements hold:
\begin{enumerate}
\item    For every $x \in H$ and $\omega \in \Omega$ a.s.,
Eq. \eqref{e:XEqn} admits a unique mild solution $X=(X^x_t)_{t\ge0,x\in H} \in D([0,\infty);H) \cap D((0,\infty);V)$.

\item  $X$ is a Markov process, which is strong Feller and irreducible in $H$, and
 $X$ admits a unique invariant measure $\pi$.
\end{enumerate}
\end{prop}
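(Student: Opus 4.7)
The plan has three ingredients. First, for existence and uniqueness of the mild solution, I would use the standard decomposition $X_t=Z_t+Y_t$, where $Z_t:=\int_0^t e^{-A(t-s)}\,\dif L_s$ is the stochastic convolution (a generalised Ornstein--Uhlenbeck process) and $Y_t$ solves the random evolution $\p_t Y=-AY+N(Y+Z)$, $Y_0=x$. The regularity step is to show $Z\in D((0,\infty);V)$ almost surely via the spectral representation $Z_t=\sum_{k\in\Z_*}\be_k e_k\int_0^t e^{-\ga_k(t-s)}\dif l_k(s)$, using the scaling property of the one-dimensional $\al$-stable processes $l_k$ together with the two-sided bound $|\be_k|\asymp\ga_k^{-\be}$: the condition $\be>\tfrac12+\tfrac{1}{2\al}$ is exactly what makes $\E\|A^{1/2}Z_t\|_H^p$ finite for some $p<\al$, while $\be<\tfrac32-\tfrac{1}{\al}$ keeps the cubic nonlinearity pathwise integrable.

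Next, I would solve the random equation for $Y$ pathwise. Since $N(u)=u-u^3$ is one-sided Lipschitz and strongly dissipative (the cubic term gives a negative contribution in $\Ll N(u)-N(v),u-v\Rr_H$) and $A=-\De$ is positive and self-adjoint on $H$, a Galerkin/monotonicity argument analogous to the deterministic real Ginzburg--Landau equation with a $V$-valued perturbation produces a unique $Y\in C([0,\infty);H)\cap L^2_\loc([0,\infty);V)$; reassembling $X=Y+Z$ yields the claimed mild solution in $D([0,\infty);H)\cap D((0,\infty);V)$. The Markov property then follows from uniqueness together with the independence of increments of $L$.

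For the strong Feller and irreducibility statements, I would follow the Priola--Zabczyk approach for SPDEs driven by cylindrical $\al$-stable noise: because $\be_k\neq 0$ for every $k$ and each $l_k(1)$ admits a smooth density with finite $p$-moments for $p<\al$, a coupling argument (or a stable-noise analogue of the Bismut--Elworthy--Li formula) yields a gradient estimate of the form $|\nabla P_t\vp(x)|\le c(t)\|\vp\|_\infty(1+\|x\|_H^q)$ for $t\in(0,1]$, which gives strong Feller. Irreducibility will come from a controllability argument: the full support of $L$ on the skeleton equation combined with continuity of $x\mapsto X^x_t$ in $H$ gives $P_t\one_U(x)>0$ for every non-empty open $U\subset H$. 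Existence of $\pi$ follows from a Lyapunov bound $\E\|X_t^x\|_H^p\le C(1+e^{-\la t}\|x\|_H^p)$ with $p<\al$, combined with the compact embedding $V\hookrightarrow H$ and Krylov--Bogoliubov; uniqueness is then a consequence of Doob's theorem.

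The hard part will be the strong Feller property. The classical gradient tools used elsewhere in the paper (Bismut--Elworthy--Li, Wang-type Harnack inequalities) rely crucially on the Gaussian structure of the noise, which is absent here; instead one must work with the more delicate Priola--Zabczyk coupling technique, whose applicability hinges precisely on the sharp conditions $\al\in(3/2,2)$ and $\tfrac12+\tfrac{1}{2\al}<\be<\tfrac32-\tfrac{1}{\al}$ imposed in the statement.
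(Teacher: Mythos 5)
This proposition is not proved in the paper at all: it is imported verbatim from the references \cite{Xu13, WXX} (the text states that these properties ``can be found in \cite{Xu13, WXX}''), so there is no internal proof to compare against. That said, your outline is consistent with the strategy actually used in those references: the splitting $X=Y+Z$ with $Z$ the stable Ornstein--Uhlenbeck convolution and $Y$ solving a random PDE pathwise is exactly the decomposition the present paper reuses in Section 5.3 (equations \eqref{e:OUAlp}--\eqref{e:Y}), the condition $\beta>\tfrac12+\tfrac1{2\alpha}$ is indeed what makes $\E\|A^{1/2}Z_t\|_H^p$ finite for $p<\alpha$ (compare Lemma \ref{l:ZEst}), and strong Feller, irreducibility and uniqueness of $\pi$ are obtained in \cite{Xu13,WXX} along the lines you describe (finite-dimensional truncation plus a gradient estimate in the spirit of Priola--Zabczyk for the strong Feller property, a support/controllability argument for irreducibility, Krylov--Bogoliubov plus Doob for $\pi$). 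One caveat: your attribution of the upper bound $\beta<\tfrac32-\tfrac1\alpha$ to ``keeping the cubic nonlinearity pathwise integrable'' is off --- pathwise integrability of $N(Y+Z)$ only needs $Z\in V$, which the lower bound on $\beta$ already provides; the upper bound is a non-degeneracy condition (the noise must not be too smooth) and is what makes the strong Feller and irreducibility arguments in \cite{Xu13,WXX} work. Since your text is a plan rather than a proof, the genuinely hard step (the gradient estimate for the stable-noise semigroup) remains unexecuted, but as a roadmap it is sound and matches the cited literature.
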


Moreover, we prove that the system converges to its invariant measure $\mu$ with exponential rate under a topology stronger than the total variation, and the occupation measure $\mathcal L_t$ obeys the moderate deviation principle by constructing some Lyapunov test functions in our previous paper \cite{WXX}.
\vskip0.3cm

We shall establish in this paper the large deviation principle for the occupation measure $\mathcal L_t$ in the next theorem.
\begin{theorem}\label{thm main 2} Assume that $\alpha \in (3/2,2)$ and  $\frac 12+\frac{1}{2\alpha}<\beta<\frac 32-\frac{1}{\alpha}$.
Then the family $\PP^{\nu}(\mathcal L_T\in \cdot)$ as $T\rightarrow +\infty$ satisfies the large deviation principle with respect to the $\tau$-topology, with  a good rate function, uniformly for any initial measure $\nu$ in $\mathcal M_1(H)$.
\end{theorem}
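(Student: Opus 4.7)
The plan is to deduce Theorem~\ref{thm main 2} from Wu's hyper-exponential recurrence criterion recalled in Section~4. By Proposition~\ref{Thm Xu 13}, the Markov semigroup $P_t$ is already strong Feller and irreducible on $H$. So the entire task reduces to verifying: for every $\lambda>0$ there is a compact set $K\subset H$ with $\sup_{x\in K}\EE^{x}[e^{\lambda \tau_{K}}]<\infty$ and $\EE^{x}[\tau_{K}]<\infty$ for every $x\in H$, where $\tau_{K}=\inf\{t\ge 1: X_{t}\in K\}$. I will take compact sets of the form $K_{R}=\{x\in H:\|x\|_{V}\le R\}$, which are compact in $H$ because $V=D(A^{1/2})\hookrightarrow H$ compactly.

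The first main step is the standard decomposition $X_{t}=Y_{t}+Z_{t}$, where $Z_{t}=\int_{0}^{t}e^{-A(t-s)}\,\dif L_{s}$ is the Ornstein--Uhlenbeck stochastic convolution. Under the assumption $\beta>\tfrac{1}{2}+\tfrac{1}{2\alpha}$, the estimates from \cite{Xu13,WXX} give that $Z_t\in V$ almost surely and that $\EE\|Z_{t}\|_{V}^{p}$ and $\EE\|Z_{t}\|_{L^{4}}^{p}$ are bounded uniformly in $t\ge 0$ for any $p<\alpha$. The remainder $Y_{t}$ solves, pathwise, the random PDE $\p_{t}Y_{t}+AY_{t}=N(Y_{t}+Z_{t})$, so that the energy identity together with $-\langle u^{3},u\rangle_{H}=-\|u\|_{L^{4}}^{4}$ and Young's inequality yields
\begin{equation*}
\frac{\dif}{\dif t}\|Y_{t}\|_{H}^{2}+\|A^{1/2}Y_{t}\|_{H}^{2}+\|Y_{t}\|_{L^{4}}^{4}\le C\bigl(1+\|Z_{t}\|_{L^{4}}^{4}\bigr).
\end{equation*}
This is the crucial dissipative inequality driven by the cubic term $-x^{3}$: the $L^{4}$ coercivity is what ``wins'' against the heavy tails of $L_{t}$.

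Next I would iterate this inequality to obtain an exponential-moment Lyapunov bound. The idea is to show that for some small $q\in(0,\alpha)$ and some $\delta>0$,
\begin{equation*}
\EE^{x}\bigl[e^{q\|X_{1}\|_{H}^{2}}\bigr]\le e^{-\delta}\,e^{q\|x\|_{H}^{2}}+C,
\end{equation*}
by splitting the expectation according to the size of the path of $Z$ on $[0,1]$ and using that $\EE\exp(q\|Z\|^{2})$ is finite on events where the supremum of $Z$ is controlled. Combined with a parabolic smoothing estimate that upgrades $H$-bounds to $V$-bounds at time $t=1$, this produces a geometric drift toward a compact set $K_{R}$ of the required form. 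A standard Meyn--Tweedie computation then converts the geometric drift into the hyper-exponential moment $\sup_{x\in K_{R}}\EE^{x}[e^{\lambda \tau_{K_{R}}}]<\infty$ for arbitrarily large $\lambda$, which is exactly the criterion needed.

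The main obstacle is genuinely the incompatibility between the $\alpha$-stable noise (for which only moments of order $p<\alpha<2$ are finite) and the $L^{4}$ coercivity that underlies the dissipation: classical $L^{2}$-It\^o Lyapunov analysis is unavailable. The tactic is to pay in the Lyapunov exponent $q$: choose $q$ small enough that all exponential $\alpha$-stable moments on bounded time intervals remain finite, while still exploiting the cubic damping to gain super-linear decay of $\|Y_{t}\|_{H}$ relative to $\|Z_{t}\|_{L^{4}}$. The parabolic smoothing of $e^{-At}$ at time $1$ and the compact embedding $V\hookrightarrow H$ then assemble these ingredients into hyper-exponential recurrence, and Wu's criterion closes the argument.
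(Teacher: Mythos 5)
Your overall strategy --- Wu's criterion, the decomposition $X_t=Y_t+Z_t$ with the stochastic convolution $Z$, and the dissipative inequality produced by the cubic nonlinearity --- is exactly the paper's. But the central quantitative step you propose, the exponential Lyapunov drift
\begin{equation*}
\EE^{x}\bigl[e^{q\|X_{1}\|_{H}^{2}}\bigr]\le e^{-\delta}\,e^{q\|x\|_{H}^{2}}+C,
\end{equation*}
cannot hold for any $q>0$: an $\alpha$-stable process with $\alpha<2$ has only polynomial tails, so $\sup_{t\le 1}\|Z_t\|_{V}$ has finite moments only of order $p<\alpha$, and since $\|X_1\|_{H}\ge\|Z_1\|_{H}-\|Y_1\|_{H}$ with $\|Y_1\|_{H}$ controlled by $\sup_{t\le1}\|Z_t\|_{V}$, the left-hand side above is $+\infty$. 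Your remark that one can ``choose $q$ small enough that all exponential $\alpha$-stable moments on bounded time intervals remain finite'' contradicts your own (correct) observation that only moments of order $p<\alpha$ exist; conditioning on the event that the path of $Z$ is small does not rescue the unconditional expectation. Moreover, even a valid geometric drift $\EE^{x}V(X_1)\le\rho V(x)+C$ with $\rho\in(0,1)$ would only yield $\EE^{x}e^{\lambda\tau_K}<\infty$ for $\lambda$ below a threshold determined by $\rho$, whereas Wu's criterion requires this for \emph{every} $\lambda>0$.

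The paper closes this gap without invoking any exponential moments. From the Riccati inequality $h'(t)\le -h(t)^2+K_T^2$ for $h(t)=\|Y_t\|_{H}^2$ (your dissipative inequality after dropping the $V$-norm term and using $\|Y\|_{L^4}^4\ge\|Y\|_{H}^4$), the comparison theorem gives the ``coming down from infinity'' bound $\sup_{t\in[T/2,T]}\|Y_t\|_{H}\le C(T)\bigl(1+\sup_{t\le T}\|Z_t\|_{V}\bigr)$ with $C(T)$ \emph{independent of the initial datum}. Feeding this into the mild formulation and the smoothing estimate $\|A^{\sigma}e^{-At}\|_{H}\le C_\sigma t^{-\sigma}$ yields $\EE^{x}\bigl[\|X_1\|^{p}_{H_\delta}\bigr]\le C$ for $p<\alpha/4$ and $\delta\in(0,1/2)$, uniformly over $x\in H$. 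Chebyshev's inequality and the Markov property at integer times then give $\PP^{\nu}(\tau_M>n)\le (C/M^{p})^{n}$ for the hitting time of the compact set $K=\{x:\|x\|_{H_\delta}\le M\}$, uniformly in the initial law $\nu$; since $M$ is at your disposal, $\sup_{\nu}\EE^{\nu}e^{\lambda\tau_M}<\infty$ for every $\lambda>0$. It is this uniform polynomial moment bound, not a Meyn--Tweedie exponential drift, that delivers hyper-exponential recurrence here, and your argument needs to be rebuilt around it.
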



\section{General results about large deviations}
In this section, we recall some general results on the Large Deviation Principle for strong Feller and irreducible   Markov processes  from \cite{Wu00, Wu01}.

Let $E$ be a Polish metric space. Consider a general $E$-valued c\`adl\`ag  Markov process
$$
\left(\Omega, \{\mathcal F_t\}_{t\ge0}, \mathcal F, \{X_t(\omega)\}_{t\ge0}, \{\mathbb P_x\}_{x\in E}\right),
$$
where
\begin{itemize}
  \item  $\Omega=D( [0,+\infty); E)$, which is the space of the c\`adl\`ag functions from $[0,+\infty)$ to $E$ equipped with the  Skorokhod topology; for each $\omega\in \Omega$, $X_t(\omega)=\omega(t)$;
  \item $\FF_t^0=\sigma\{X_s: 0\le s\le t\}$ for any $t\ge 0$ (nature filtration);
  \item $\FF=\sigma\{X_t: t\ge0\}$ and $\PP^{x}(X_0=x)=1$.
\end{itemize}
 Hence, $\PP^{x}$ is the law of the Markov process with initial state $x\in E$.  For any initial measure $\nu$ on $E$, let $\PP^{\nu}(\dif \omega):=\int_E \PP^{x}(\dif \omega)\nu(\dif x)$. Its transition probability is denoted by $\{P_t(x, dy)\}_{t\ge0}$.

For all $f\in\mathcal   B_b(E)$, define
$$
P_tf(x):=\int_E P_t(x, \dif y)f(y)  \ \ \ \text{for all } t\ge0, x\in E.
$$
For any $t>0$, $P_t$ is said to be {\it strong Feller} if $P_t\varphi\in C_b(E)$ for any $\varphi\in \mathcal B_b(E)$; $P_t$ is {\it irreducible} in $E$ if $P_t1_O(x)>0$ for any $x\in E$ and any non-empty open subset $O$ of $E$. $\{P_t\}_{t\ge0}$ is  {\it accessible }to $x \in E$,  if the resolvent $\{\mcl R_{\lambda}\}_{\lambda>0}$ satisfies
$$\mcl R_{\lambda}(y, \mcl U):= \int_0^\infty e^{-\lambda t}P_t(y, \mcl U) \dif t>0 , \ \ \forall \lambda>0$$
for all $y \in E$ and all neighborhoods $\mcl U$ of $x$. Notice that the accessibility of $\{P_t\}_{t\ge0}$ to any $x\in E$ is   the so called {\it topological transitivity} in Wu \cite{Wu01}.

\vskip0,3cm

The empirical measure of level-$3$ (or process level) is given by
$$
R_t:=\frac1t\int_0^t \delta_{\theta_s X}\dif s
$$
where $(\theta_sX)_t=X_{s+t}$ for all $t, s\ge0$ are the shifts on $\Omega$. Thus, $R_t$ is a random variable valued in $\mathcal M_1(\Omega)$, the space of all probability measures on $\Omega$.

The level-$3$ entropy functional of Donsker-Varadhan $H:\mathcal M_1(\Omega)\rightarrow [0,+\infty]$ is defined by
\begin{equation*} \label{e: DV}
H(Q):=\begin{cases}
 \mathbb E^{\bar Q}h_{\mathcal F_1^0}(\bar Q_{w(-\infty,0]};\mathbb P_{w(0)}) & \text{if } Q\in \mathcal M_1^s(\Omega);  \\
+\infty & \text{otherwise},
\end{cases}
\end{equation*}
where
 \begin{itemize}
   \item $\mathcal M_1^s(\Omega)$ is the subspace of $\mathcal M_1(\Omega)$, whose  elements are moreover stationary;
   \item $\bar Q$ is the unique stationary extension of $Q\in \mathcal M_1^s(\Omega)$ to $\bar \Omega:=D(\mathbb R; E)$; $\mathcal F_t^s=\sigma\{X(u); s\le u\le t\},\forall  s,t\in\R, s\le t$;
   \item $\bar Q_{w(-\infty,t]}$ is the regular conditional distribution of $\bar Q$ knowing $\mathcal F_t^{-\infty}$;
   \item $h_{\mathcal G}(\nu;\mu)$ is the usual relative entropy or Kullback information of $\nu$ with respect to $\mu$ restricted to the $\sigma$-field $\mathcal G$, given by
\begin{equation*}
h_{\mathcal G}(\nu;\mu):=\begin{cases}
  \int\frac{\dif\nu}{\dif\mu}|_{\mathcal G} \log\left(\frac{\dif\nu}{\dif\mu}|_{\mathcal G}\right) \dif\mu & \text{ if }  \nu\ll \mu \text{ on } \ \mathcal G;  \\
+\infty & \text{otherwise}.
\end{cases}
\end{equation*}
 \end{itemize}

The level-$2$ entropy functional $J: \mathcal M_1(E)\rightarrow [0, \infty]$ which governs the LDP in our main result is
\begin{equation}\label{rate func}
J(\mu)=\inf\{H(Q)| Q\in \mathcal M_1^s(\Omega) \  \ \text{and } Q_0=\mu\}, \ \ \ \ \forall \mu\in \mathcal M_1(E),
\end{equation}
where $Q_0(\cdot)=Q(X_0\in \cdot)$ is the marginal law at $t=0$.

   Recall the following hyper-exponential recurrence criterion for LDP established by Wu \cite[Theorem 2.1]{Wu01}, also see Gourcy \cite[Theorem 3.2]{Gou2}.

\vskip0.3cm
For  any measurable set $K\in E$, let
\begin{equation}\label{stopping time}
\tau_K=\inf\{t\ge0 \ \text{ s.t.}\   X_t\in K\},\ \ \ \tau_K^{(1)}=\inf\{t\ge1\  \text{ s.t.}\ X_t\in K\}.
\end{equation}

\begin{theorem}\cite{Wu01}\label{thm Wu}
Let $\mathcal A\subset \mathcal M_1(E)$ and assume that
\begin{equation*}\label{condition 1}
\{P_t\}_{t\ge0} \text{ is strong Feller and topologically irreducible on  } E.
\end{equation*}
If for any $\lambda>0$, there exists some compact set $K\subset \subset E$, such that
\begin{equation}\label{condition 2}
\sup_{\nu\in\mathcal A}\E^{\nu}e^{\lambda\tau_K}<\infty, \ \  \text{and} \ \ \
\sup_{x\in K}\E^{x}e^{\lambda\tau_K^{(1)}}<\infty.
\end{equation}
 Then the family $\mathbb P^{\nu}(\mathcal L_t\in\cdot)$ satisfies the LDP on $\mathcal M_1(E)$ w.r.t. the $\tau$-topology with the rate function $J$ defined by \eqref{rate func}, and uniformly for initial measures $\nu$ in the subset $\mathcal A$. More precisely, the following three properties hold:
\begin{itemize}
  \item[(a1)] for any $a\ge0$, $\{\mu\in \mathcal M_1(E); J(\mu)\le a \}$ is compact in  $(\mathcal M_1(E),\tau)$;
  \item[(a2)] (the lower bound) for any open set $G$ in $(\mathcal M_1(E), \tau)$,
   $$
   \liminf_{T\rightarrow \infty}\frac1T\log\inf_{\nu\in\mathcal A}\mathbb P^{\nu}(\mathcal L_T\in G)\ge -\inf_G J;
   $$
  \item[(a3)](the upper bound) for any  closed set $F$ in $(\mathcal M_1(E), \tau)$,
   $$
   \limsup_{T\rightarrow \infty}\frac1T\log\sup_{\nu\in\mathcal A}\mathbb P^{\nu}(\mathcal L_T\in F)\le -\inf_F J.
   $$
\end{itemize}

\end{theorem}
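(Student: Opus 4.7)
The plan is to follow the Donsker--Varadhan program at the process level and then push the result down to the state level via contraction. Introduce the level-$3$ empirical measure $R_T = \frac{1}{T}\int_0^T \delta_{\theta_s X}\,\mathrm{d}s \in \mathcal{M}_1(\Omega)$, prove a uniform LDP for $R_T$ with rate function $H$, and observe that $\mathcal{L}_T = (R_T)_0$, the marginal at $t = 0$. Since the map $Q \mapsto Q_0$ is continuous from $\mathcal{M}_1^s(\Omega)$ (with its natural $\tau$-topology) into $(\mathcal{M}_1(E),\tau)$, the contraction principle will deliver the LDP for $\mathcal{L}_T$ with rate function $J$ given by \eqref{rate func}, and goodness of $J$ descends from goodness of $H$.

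For the upper bound, the first task is to upgrade \eqref{condition 2} into exponential tightness of $\mathbb{P}^\nu(\mathcal{L}_T \in \cdot)$ uniformly in $\nu \in \mathcal{A}$. Choose, for each $n$, a compact $K_n \uparrow E$ so that both exponential moments in \eqref{condition 2} are bounded at rate $\lambda_n \to \infty$. A regenerative excursion decomposition between successive returns to $K_n$ then yields $\sup_{\nu \in \mathcal{A}} \mathbb{E}^\nu \exp\bigl(T \lambda_n \mathcal{L}_T(K_n^c)\bigr) \le e^{cT}$, hence $\limsup_T T^{-1}\log \sup_\nu \mathbb{P}^\nu(\mathcal{L}_T(K_n^c) > \varepsilon) \le -\varepsilon \lambda_n + c$, producing level-sets of $\mathcal{L}_T$ concentrated on $\tau$-compact subsets of $\mathcal{M}_1(E)$ (the $\tau$-compactness comes from combining tightness with entropy control against the resolvent). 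Once exponential tightness is in hand, the variational upper bound on compacts is obtained by writing $\mathbb{E}^\nu \exp(T \langle R_T, F\rangle)$ as a Feynman--Kac semigroup action and identifying the limit with $\sup\{\langle Q, F\rangle - H(Q)\}$; the crucial use of the strong Feller property is that it allows $F$ to range over bounded measurable (not merely continuous) functionals of the trajectory, which is exactly how the bound is promoted from $\sigma(\mathcal{M}_1, C_b)$ to the $\tau$-topology.

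For the lower bound, given any $Q \in \mathcal{M}_1^s(\Omega)$ with $H(Q) < \infty$ and any $\tau$-neighborhood $U$ of $Q_0$, perform the classical Donsker--Varadhan tilt: decompose $Q$ into its one-step conditional kernel and define a new Markov process whose invariant measure's zero-marginal is $Q_0$. The ergodic theorem for the tilted process places $\mathcal{L}_T$ in $U$ with probability tending to $1$, and a Girsanov--type Radon--Nikodym computation gives the change-of-measure cost $\exp(-T H(Q) + o(T))$. The topological irreducibility and strong Feller hypothesis ensure that the tilted process can be coupled to reach the support of $Q_0$ in a finite time with exponential moments, so the bound is uniform over $\nu \in \mathcal{A}$ (and in fact over all $\nu \in \mathcal{M}_1(E)$ for this bound).

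The main obstacle is coordinating two delicate pieces simultaneously: uniformity over the class $\mathcal{A}$ of initial distributions and the strengthening from the weak to the $\tau$-topology. Uniformity is handled entirely by the first part of \eqref{condition 2}, which controls the time to enter a single compact $K$ uniformly in $\nu$, followed by the second part of \eqref{condition 2} which turns the post-entrance trajectory into a recurrent process with exponentially fast renewal cycles starting from $K$. The $\tau$-topology upgrade is the more subtle step and is precisely where the strong Feller hypothesis is indispensable: it implies that $P_t \mathbf{1}_B$ is continuous for every Borel $B$, so that the variational duality underlying the Gärtner--Ellis/Donsker--Varadhan upper bound can be tested against bounded measurable rather than merely continuous cylinder functionals, which is the defining distinction between the weak and $\tau$-topologies on $\mathcal{M}_1(E)$.
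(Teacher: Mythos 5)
A preliminary remark: this paper does not prove Theorem \ref{thm Wu} at all. It is recalled verbatim from Wu \cite[Theorem 2.1]{Wu01} (see also Gourcy \cite[Theorem 3.2]{Gou2}) and is used in Section 5 purely as a black box; the paper's own contribution is the verification of the hypothesis \eqref{condition 2} for the SPDEs at hand. So your proposal can only be measured against Wu's original argument, not against anything in this paper.

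Measured against that argument, your sketch has the right overall architecture --- the Donsker--Varadhan level-3 program, the contraction $Q\mapsto Q_0$ producing the rate \eqref{rate func}, regeneration between returns to $K$ to exploit \eqref{condition 2}, and an appeal to strong Feller to reach the $\tau$-topology --- and your regenerative estimate $\sup_{\nu\in\mathcal{A}}\mathbb{E}^{\nu}\exp\bigl(\lambda T\mathcal{L}_T(K^c)\bigr)\le e^{cT}$ is correct (each cycle built on $\tau_K^{(1)}$ lasts at least time $1$, so there are at most $\lceil T\rceil$ cycles, and the strong Markov property iterates the exponential moments in \eqref{condition 2}). The genuine gap is the very next step. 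Such bounds give exponential tightness only for the \emph{weak} topology: sets of the form $\{\mu:\mu(K_n^c)\le\varepsilon_n\ \forall n\}$ are weakly compact but in general not $\tau$-compact. Indeed, if $x_n\to x$ with $x_n\ne x$, the family $\{\delta_{x_n}\}_n$ lies in such a set, yet no subnet converges setwise, since any setwise limit would assign mass $0$ to every singleton of the countable compact set $\{x_n\}_n\cup\{x\}$ while giving that set total mass $1$. So your claim that the regenerative bound produces ``level-sets of $\mathcal{L}_T$ concentrated on $\tau$-compact subsets'' does not follow, and the parenthetical ``entropy control against the resolvent'' is exactly the missing proof rather than a proof. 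In Wu's development this is the heart of the matter, and it is supplied by the theory of uniformly integrable operators of \cite{Wu00}: strong Feller together with irreducibility forces the kernels $P_t(x,\cdot)$, $t>0$, to be mutually absolutely continuous with respect to a common reference measure; $P_\delta$ then maps tight families of measures into uniformly integrable families of densities, so by Dunford--Pettis the smoothed measures $\mathcal{L}_T P_\delta$ live in relatively $\tau$-compact sets, and a separate exponential approximation argument transfers this back to $\mathcal{L}_T$. Your use of strong Feller --- letting the Feynman--Kac functional be tested against bounded measurable $F$ --- is a true statement about where measurability enters, but by itself it only yields the upper bound over $\tau$-compact sets; without the $\tau$-exponential tightness just described it cannot be extended to $\tau$-closed sets, which is the entire content of (a3), and the same structure (not the contraction from $H$ alone) is what gives the $\tau$-compactness of the level sets of $J$ in (a1). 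Finally, the uniformity over $\nu\in\mathcal{A}$ in the lower bound comes from the first half of \eqref{condition 2} via the strong Markov property at $\tau_K$, not from any coupling property of the tilted process, so that paragraph of your sketch also misplaces the mechanism.
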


\section{The proof of main theorems}
In this section, we   prove  the main  theorems of this paper  according to Theorem \ref{thm Wu}.

\begin{proof}[Proof of Theorems \ref{thm main} and  \ref{thm main 01}]
\ Let $\{X_t\}_{t\ge0}$ be the solution to Eq. \eqref{eq SPDE}. Since $X$ is strong Feller and irreducible in $H$, according to Theorem \ref{thm Wu}, to prove Theorems \ref{thm main} and  \ref{thm main 01}, we need to prove that the hyper-exponential recurrence condition \ref{condition 2} is fulfilled. The verification of this condition  will be given  in the following two subsections.
\end{proof}

\subsection{The proof of Theorem \ref{thm main}}

 \begin{lemma}\label{lem expect bound}
There exist some constant $C$ and $t_1\in[1,2]$ such that for any $X_0\in L^2(\Omega\rightarrow H;\cF_0,\PP)$,
$$
\EE\left[\|X_{t_1}\|_{V}  \right]\le C.
$$
\end{lemma}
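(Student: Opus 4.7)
The plan is to apply Itô's formula to $\|X_t\|_H^2$, use the coercivity hypothesis (H3) with $r>1$ to derive a super-linear dissipative differential inequality, and then exploit the fact that super-linear dissipation forces the solution into a bounded region in finite time regardless of the initial data.

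First I would apply Itô's formula, combined with (H3), to obtain
\begin{equation*}
\mathbb{E}\|X_t\|_H^2 + c_2 \int_0^t \mathbb{E}\|X_s\|_V^{r+1}\,\dif s \le \mathbb{E}\|X_0\|_H^2 + c_1 t + c_3\int_0^t \mathbb{E}\|X_s\|_H^2\,\dif s,
\end{equation*}
after a standard localization by stopping times $\tau_N=\inf\{t: \|X_t\|_H\ge N\}$ that kills the martingale expectation and a passage to the limit $N\to\infty$ using Fatou together with (H4). Since $r>1$, the embedding $\eta\|\cdot\|_H\le\|\cdot\|_V$ and Young's inequality give, for any $\varepsilon>0$,
\begin{equation*}
c_3\|X_s\|_H^2 \le \varepsilon\|X_s\|_V^{r+1}+C_\varepsilon,
\end{equation*}
so that, setting $u(t):=\mathbb{E}\|X_t\|_H^2$, differentiating and applying Jensen's inequality ($\mathbb{E}\|X_s\|_V^{r+1}\ge\eta^{r+1}u(s)^{(r+1)/2}$), one obtains a scalar differential inequality of the form
\begin{equation*}
\frac{\dif}{\dif t}u(t) \le C_1 - C_2\, u(t)^{(r+1)/2},
\end{equation*}
with $(r+1)/2>1$.

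Next I would exploit this super-linear dissipation. Standard ODE comparison shows that any nonnegative solution of $\dot u\le -\tfrac{C_2}{2}u^\alpha$ (valid whenever $u$ is large enough that the constant term is absorbed) satisfies $u(t)\le (C_2(\alpha-1)t/2)^{-1/(\alpha-1)}$ for all $t>0$ independently of $u(0)$. Combining both regimes, there is a universal constant $M$ with $u(t)\le M$ for every $t\ge 1$ and every initial datum. Plugging this uniform bound back into the inequality displayed at the start gives
\begin{equation*}
c_2\int_1^2 \mathbb{E}\|X_s\|_V^{r+1}\,\dif s \le u(1)+c_1+|c_3|\int_1^2 u(s)\,\dif s \le C'
\end{equation*}
with $C'$ independent of $X_0$.

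Finally, by the mean value theorem for the integral there exists $t_1\in[1,2]$ with $\mathbb{E}\|X_{t_1}\|_V^{r+1}\le C'/c_2$, and Jensen's inequality converts this into the desired $L^1$ bound $\mathbb{E}\|X_{t_1}\|_V\le (C'/c_2)^{1/(r+1)}=:C$. The main obstacle I anticipate is the localization step used to legitimately take expectations in Itô's formula when the initial condition is only assumed to lie in $L^2(\Omega;H)$: one must argue that the stopped coercivity estimate passes to the limit without losing the super-linear dissipation term, which uses (H4) to control the expected value of the stochastic integral on $[0,\tau_N\wedge t]$ before taking $N\to\infty$.
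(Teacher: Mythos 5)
Your proposal is correct and follows essentially the same route as the paper: Itô's formula plus (H3) and the embedding $\|\cdot\|_V\ge\eta\|\cdot\|_H$ yield a scalar differential inequality with super-linear dissipation (the paper's Appendix Lemma 6.1 plays the role of your ODE comparison, giving a bound for $t\ge1$ independent of the initial datum), after which the coercivity inequality on $[1,2]$ and the mean value theorem produce the time $t_1$. The only cosmetic differences are that you absorb $c_3\|X_s\|_H^2$ via Young's inequality on the $V$-norm and spell out the localization and Jensen steps, which the paper leaves implicit.
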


\begin{proof}

  {\bf Step 1.} We first prove that {\it
there exists a constant $C$ such that for any initial value $X_0\in L^2(\Omega\rightarrow H;\cF_0,\PP)$,
\begin{equation}\label{eq Lem 1 1}
\sup_{t\ge1}\EE\left[ \|X_t\|_H^2\right]\le  C.
\end{equation}
 }

By It\^o's formula, we have for any $t> s\ge 0$,
\begin{align}\label{eq Lemma 1 2}
\|X_t\|_H^2 =&\|X_s\|_H^2 +\int_s^t\left(2 _{V^*}\langle A(X_u), X_u\rangle_{V}+\|B(X_u)\|_{\cL_2(U,H)}^2 \right)\dif s\notag\\
   &+2\int_s^t\langle X_u, B(X_u)\dif W_u\rangle_H.
\end{align}
It is easy to check from \eqref{eq solution 1} that $\left\{\int_s^t\langle X_u, B(X_u)\dif W_u\rangle_H:t\ge s \right\}$ is a martingale. Taking the expectation of the both sides of \eqref{eq Lemma 1 2}, we have
 \begin{align}\label{eq Lemma 1 3}
 \EE\left[\|X_t\|_H^2\right] =&\EE\left[\|X_s\|_H^2\right] +\EE\left[\int_s^t\left(2 _{V^*}\langle A(X_u), X_u\rangle_{V}+\|B(X_u)\|_{\cL_2(U,H)}^2 \right)\dif s\right]
\end{align}
By   Condition (H3) and \eqref{eq compact}, we have
\begin{align}\label{eq Lemma 1 4}
 \EE\left[\|X_t\|_H^2\right] \le& \EE\left[\|X_s\|_H^2\right] + c_1 (t-s)-c_2 \int_s^t \EE\left[\|X_u\|_V^{r+1}\right]\dif u+c_3\int_s^t \EE\left[\|X_u\|_H^{2}\right]\dif u\notag\\
 \le& \EE\left[\|X_s\|_H^2\right] + C_1 (t-s)- C_2\int_s^t\EE\left[\|X_u\|_H^{r+1}\right]\dif u.
\end{align}

 Taking $s=0$ in the above inequality and according to  Lemma  \ref{lem p} below,  there exists a constant $C$ independent of $X_0$ satisfying  \eqref{eq Lem 1 1}.

\vskip0.3cm
{\bf Step 2.} By \eqref{eq Lemma 1 3} and Condition (H3), we obtain that
\begin{align}\label{eq Lemma 1 5}
 \EE\left[\|X_t\|_H^2\right]+c_2 \int_s^t \EE\left[\|X_u\|_V^{r+1}\right]\dif u\le \EE\left[\|X_s\|_H^2\right] +  \EE\left[\int_s^t\left(c_1+c_3\|X_u\|_H^2 \right)\dif u \right],
\end{align}
which, together with   \eqref{eq Lem 1 1}, further gives
\begin{equation*}
 \int_1^2\EE\left[\|X_u\|_{V}^{r+1}\right]\dif u \le C,
\end{equation*}
where the constant $C$ is independent of $X_0$. Therefore, we can  conclude that  there exists  $t_1\in[1,2]$ such that $\EE\left[\|X_{t_1}\|_V\right]\le  C$.

The proof is complete.
\end{proof}

 \subsubsection{The hyper-exponential recurrence}
Next, we will verify the hyper-exponential recurrence condition \eqref{condition 2}.
\vskip0.3cm
By the Markov property of $X$ and Lemma \ref{lem expect bound}, we know that there exists a sequence of times $\{t_n;n\ge1\}$ such that $t_n\in [2n-1,2n]$ and
\Be\label{eq X tn}
\EE\left[\|X_{t_n}\|_V\right]\le  C,
\Ee
where $C$ is the constant in Lemma \ref{lem expect bound}.

\vskip0.3cm

For any $M>0$, define the hitting time  of $\{X_{t_n}\}_{n\ge1}$:
\begin{equation}
\tau_M=\inf\{k\ge1:\|X_{k}\|_{V}\le M\}.
\end{equation}
 Let
 $$
 K:=\{x\in  V: \|x\|_{V}\le M\}.
 $$
 Clearly, $K$ is compact in $H$. Recall the definitions of  $\tau_K$ and $\tau_K^{(1)}$ in \eqref{stopping time}. It is obvious  that
\begin{equation}
\tau_K\le \tau_M,\ \ \ \ \ \tau_K^{(1)}\le \tau_M.
\end{equation}
This fact, together with the following important theorem,  implies the hyper-exponential recurrence condition  \eqref{condition 2}.

\begin{theorem}\label{thm hyper-exp} For any $\lambda>0$,  there exists a constant $M$  such that
$$
\sup_{\nu\in\mathcal M_1(H)}\E^{\nu}[e^{\lambda\tau_M}]<\infty.
$$
\end{theorem}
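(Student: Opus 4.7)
The plan is to leverage Lemma \ref{lem expect bound} iteratively via the Markov property, converting its single-step $L^1$ bound into a geometric-type tail estimate for $\tau_M$ that is uniform in the initial distribution.

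The first step is to construct, for every $\nu \in \mathcal M_1(H)$, a sequence of (path-dependent) times $\{t_n\}_{n\ge 1}$ with $t_n - t_{n-1} \in [1,2]$ satisfying the conditional estimate
\[
\EE^{\nu}[\,\|X_{t_n}\|_V \mid \mathcal F_{t_{n-1}}\,] \le C \qquad \PP^{\nu}\text{-a.s.,}
\]
the constant $C$ being precisely the one furnished by Lemma \ref{lem expect bound}. This is carried out inductively: at stage $n$, apply Lemma \ref{lem expect bound} to the Markov process restarted from the $\mathcal F_{t_{n-1}}$-measurable state $X_{t_{n-1}}$; this supplies an increment $\Delta t_n \in [1,2]$ satisfying the bound (a standard measurable selection making the choice measurable), and one sets $t_n = t_{n-1} + \Delta t_n$. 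The decisive feature is that the constant in Lemma \ref{lem expect bound} does not depend on the starting configuration, so the bound is genuinely uniform in $\nu$.

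The second step converts the $L^1$ control into a tail estimate. Markov's inequality applied conditionally gives
\[
\PP^{\nu}(\|X_{t_n}\|_V > M \mid \mathcal F_{t_{n-1}}) \le \frac{C}{M},
\]
and iterating the tower property yields
\[
\PP^{\nu}(\tau_M > n) = \PP^{\nu}(\|X_{t_k}\|_V > M,\ 1 \le k \le n) \le \left(\frac{C}{M}\right)^{n},
\]
uniformly in $\nu$. Consequently, choosing $M$ so that $C e^{\lambda}/M < 1$,
\[
\EE^{\nu}[e^{\lambda \tau_M}] \le \sum_{n=0}^{\infty} e^{\lambda(n+1)} \left(\frac{C}{M}\right)^{n} = \frac{e^{\lambda}}{1 - C e^{\lambda}/M},
\]
which is finite and independent of $\nu$, yielding the asserted uniform exponential moment.

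The principal obstacle lies in the first step: Lemma \ref{lem expect bound} is formulated for a single deterministic initial datum, and one must promote it to a uniform \emph{conditional} estimate along a random sequence of states. The saving grace is that the constant there depends solely on the structural parameters appearing in (H3) and not on the data, so the iterative application via the strong Markov property is legitimate. Once this is in place, the geometric tail bound and the summation to the exponential moment are entirely routine.
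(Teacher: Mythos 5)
Your proposal follows essentially the same route as the paper: iterate the uniform bound of Lemma \ref{lem expect bound} along a sequence of times with increments in $[1,2]$ via the Markov property and Chebyshev's inequality to obtain the geometric tail $\PP^{\nu}(\tau_M>n)\le (C/M)^n$ uniformly in $\nu$, and then sum the exponential series after choosing $M>Ce^{\lambda}$. The only difference is cosmetic: you make explicit (via conditional expectations and a measurable selection of the intermediate times) the uniformity-in-the-initial-state step that the paper invokes implicitly when it asserts the existence of the sequence $\{t_n\}$ with $t_n\in[2n-1,2n]$ and $\EE[\|X_{t_n}\|_V]\le C$.
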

\begin{proof}
For any $n\in\N$, let
$$
B_n:=\left\{\|X_{t_{j}}\|_{V}>M: j=1,\cdots,n\right\}=\{\tau_M>n\}.
$$
By the Markov property of $\{X_{t_n} \}_{n\in\N}$, Chebychev's inequality and \eqref{eq X tn}, we obtain that for any $\nu\in \M_1(H)$,
\begin{align*}
\PP^{\nu}(B_n)=&\PP^{\nu}(B_{n-1})\cdot\PP^{\nu}(B_n|B_{n-1})\\
\le&\PP^{\nu}(B_{n-1})\cdot \E^{\nu} \left\{\frac{\E^{X_{t_{n-1}}}\left[\|X_{t_n}\|_{V}\right]}{M}\right\}\\
\le & \PP^{\nu}(B_{n-1})\cdot\frac{C}{M},
\end{align*}
where $C$ is the constant in Lemma \ref{eq Lemma 1 5}.

By the induction, we have for any $n\ge0$,
$$
\PP^{\nu}(\tau_M>n)=\PP^{\nu}(B_n)\le \left(\frac{C}{M}\right)^n.
$$
This inequality, together with Fubini's theorem, implies that for any $\lambda>0,\nu\in\M_1(H)$,
\begin{align*}
\E^{\nu}\left[ e^{\lambda\tau_M}\right]=&\int_0^{\infty}\lambda e^{\lambda t}\PP^{\nu}(\tau_M>t)\dif t\\
\le &\sum_{n=0}^{\infty}\lambda e^{\lambda (n+1)}\PP^{\nu}(\tau_M>n)\\
\le &\sum_{n=0}^{\infty}\lambda e^{\lambda (n+1)}\left(\frac{C}{M}\right)^n,
\end{align*}
which is finite as $M>C e^{\lambda}$.

The proof is complete.
\end{proof}

\subsection{The proof of Theorem \ref{thm main 01}}  The  strategy to verify the  hyper-exponential recurrence condition in this section  is inspirited by Gourcy \cite{Gou1, Gou2}.
 \vskip0.3cm

First, we establish the following crucial exponential estimate for the solution.

\BP\label{prop exp est}Under the conditions of Theorem \ref{thm main 01},  for any fixed $0<\lambda_0<-c_3/(2C_B)$ and   $x\in H$, the process $X$ satisfies that for any $t>0$,
\Be\label{eq exp1}
\EE^x\left[\exp\left(\frac{\lambda_0c_2}{2}\int_0^t \|X_s\|_V^r\dif s  \right)  \right]\le e^{\lambda_0 c_1t}\cdot e^{\lambda_0\|x\|_H^2}.
\Ee
\EP
\begin{proof}
Let $$Y_t:=\|X_t\|_H^2+\frac{c_2}{2}\int_0^t\|X_s\|_V^r\dif s.$$
By It\^o's formula and (H3), we have
\begin{align*}
\dif Y_t   =& \left(2 _{V^*}\langle A(X_t), X_t\rangle_{V}+\|B(X_t)\|_{L_2(U, H)}^2 \right)\dif t+2\langle X_t, B(X_t)\dif W_t\rangle_H+\frac{c_2}{2}\|X_t\|_V^r\dif t\notag\\
   \le& \left(c_1-\frac{c_2}{2}\|X_t\|_V^r+c_3\|X_t\|_H^2 \right)\dif t+2\langle X_t, B(X_t)\dif W_t\rangle_H.
 \end{align*}
In the same spirit, denoting by $\dif [Y,Y]_t$ the quadratic variation process of a semimartingale $Y$, we can also compute with the  It\^o's formula,
\begin{align*}
 \dif e^{\lambda_0Y_t}   =&  e^{\lambda_0Y_t} \left\{\lambda_0 \dif Y_t+\frac{\lambda_0^2}{2}\dif [Y,Y]_t\right\}\notag\\
 \le & \lambda_0 e^{\lambda_0Y_t} \left(c_1-\frac{c_2}{2}\|X_t\|_V^r+c_3\|X_t\|_H^2+2\lambda_0\|B (X_t)\|_{\cL_2(U,H)}^2\cdot\|X_t\|_H^2 \right)\dif t\notag\\
 &+ 2\lambda_0e^{\lambda_0Y_t}\langle X_t, B(X_t)\dif W_t\rangle_H. \notag
 \end{align*}
Let $Z_t:=e^{-\lambda_0 c_1 t}e^{\lambda_0 Y_t}$.
By It\^o's formula again, we obtain that when $\lambda_0<-c_3/(2C_B)$,
\begin{align*}
\dif Z_t=&e^{-\lambda_0 c_1 t}\dif e^{\lambda_0 Y_t}+e^{\lambda_0 Y_t}\dif e^{-\lambda_0 c_1 t}\\
\le & \lambda_0 Z_t\left( -\frac{c_2}{2}\|X_t\|_V^r+c_3\|X_t\|_H^2+2\lambda_0 C_B\|X_t\|_H^2 \right)\dif t+2\lambda_0Z_t\langle X_t, B(X_t)\dif W_t\rangle_H\\
\le &2\lambda_0Z_t\langle X_t, B(X_t)\dif W_t\rangle_H.
\end{align*}
Since $Z_t\ge0$, we obtain by Fatou's lamma $\EE^x[Z_t]\le \EE^x[Z_0]$, which is stronger than \eqref{eq exp1}.

The proof is complete.
\end{proof}

For any measurable set $K\subset H$, recall the  stopping times $\tau_K$ and $\tau_K^{(1)}$ defined by \eqref{stopping time}.
Now, we will verify the hyper-exponential recurrence condition \eqref{condition 2} in the following lemma.

\begin{lem} Under the conditions of Theorem \ref{thm main 01}, for any $\lambda>0$, there exists some compact set $K\subset  E$, such that
\begin{equation}\label{lem condition 2}
\sup_{x\in K}\EE^{x}\left[e^{\lambda\tau_K^{(1)}}\right]<\infty  \ \  \text{and} \ \ \ \sup_{\nu\in\mathcal M_{\lambda_0, L}}\EE^{\nu}\left[e^{\lambda\tau_K}\right]<\infty.
\end{equation}
\end{lem}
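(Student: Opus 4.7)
The plan is to choose the compact set as a closed $V$-ball,
$$
K = K_M := \{x \in V : \|x\|_V \le M\},
$$
which is compact in $H$ by the compact embedding $V \hookrightarrow H$, and then tune the radius $M = M(\lambda)$ to beat the prescribed exponential rate $\lambda$. The whole argument is driven by the exponential integrability provided by Proposition~\ref{prop exp est}.

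The first step is to convert that exponential integrability into a tail bound on $\tau_K$. On the event $\{\tau_K > t\}$ the trajectory avoids $K$, so $\|X_s\|_V > M$ for every $s \in [0,t]$ and hence $\int_0^t \|X_s\|_V^{r}\dif s > M^{r} t$. A direct Chebyshev inequality combined with Proposition~\ref{prop exp est} then yields
$$
\PP^x(\tau_K > t) \;\le\; \exp\!\Big({-\tfrac{\lambda_0 c_2}{2} M^{r} t}\Big)\,\EE^x\!\Big[\exp\!\Big(\tfrac{\lambda_0 c_2}{2}\!\int_0^t \|X_s\|_V^{r}\dif s\Big)\Big]
\;\le\; e^{\lambda_0 \|x\|_H^2}\,\exp\!\Big(\big(\lambda_0 c_1 - \tfrac{\lambda_0 c_2}{2} M^{r}\big) t\Big).
$$
Choose $M$ so large that $\frac{\lambda_0 c_2}{2} M^{r} - \lambda_0 c_1 > \lambda + 1$. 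Integrating the tail bound (as in the proof of Theorem~\ref{thm hyper-exp}) gives a constant $C_\lambda<\infty$ with
$$
\EE^x\!\big[e^{\lambda \tau_K}\big] \;\le\; C_\lambda\, e^{\lambda_0 \|x\|_H^2}, \qquad \forall\,x\in H.
$$
Integrating this against any $\nu \in \cM_{\lambda_0,L}$ gives $\sup_{\nu \in \cM_{\lambda_0,L}} \EE^\nu[e^{\lambda\tau_K}] \le C_\lambda L$, which is the second half of \eqref{lem condition 2}.

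The second step handles $\tau_K^{(1)}$ via the Markov property. Writing $\tau_K^{(1)} = 1 + \tau_K \circ \theta_1$ pathwise,
$$
\EE^x\!\big[e^{\lambda \tau_K^{(1)}}\big] \;=\; e^\lambda\, \EE^x\!\big[\EE^{X_1}[e^{\lambda \tau_K}]\big]
\;\le\; C_\lambda e^\lambda\, \EE^x\!\big[e^{\lambda_0 \|X_1\|_H^2}\big].
$$
The supermartingale $Z_t = e^{-\lambda_0 c_1 t} e^{\lambda_0 Y_t}$ produced inside the proof of Proposition~\ref{prop exp est} (which in particular dominates $e^{-\lambda_0 c_1 t} e^{\lambda_0 \|X_t\|_H^2}$) gives, at $t=1$,
$$
\EE^x\!\big[e^{\lambda_0 \|X_1\|_H^2}\big] \;\le\; e^{\lambda_0 c_1}\, e^{\lambda_0 \|x\|_H^2}.
$$
For $x \in K_M$ the compact-embedding inequality \eqref{eq compact} gives $\|x\|_H \le \eta^{-1}\|x\|_V \le M/\eta$, so the right-hand side is uniformly bounded on $K_M$, yielding $\sup_{x\in K} \EE^x[e^{\lambda \tau_K^{(1)}}] < \infty$.

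The main obstacle is really only one of bookkeeping: the exponent $\lambda_0$ is fixed by the coercivity balance $\lambda_0 < -c_3/(2C_B)$ used in Proposition~\ref{prop exp est}, while $\lambda > 0$ is arbitrary. The coercivity term $-c_2 \|v\|_V^{r+1}$ in (H3), propagated into the integrated functional $\int_0^t \|X_s\|_V^{r}\dif s$, is exactly what lets the free radius $M$ dominate the $\lambda$-factor in the tail exponent; once $M$ is chosen accordingly, the rest is Chebyshev, Fubini, and the Markov property.
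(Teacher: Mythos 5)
Your proof is correct and follows essentially the same route as the paper: the same compact $V$-ball $K=\{x\in V:\|x\|_V\le M\}$, the same use of Proposition~\ref{prop exp est} via Chebyshev to control the hitting-time tail, and the same tuning of $M$ to dominate the prescribed rate $\lambda$. The only organizational difference is in handling the constraint $t\ge 1$: the paper bounds $\PP^{\nu}(\tau_K^{(1)}>n)$ directly by noting that on this event $\cL_n(K)\le 1/n$, hence $\cL_n(\|x\|_V^r)\ge M^r(1-\tfrac1n)$, and then uses $\tau_K\le\tau_K^{(1)}$ for the second bound, whereas you bound $\tau_K$ first and recover $\tau_K^{(1)}=1+\tau_K\circ\theta_1$ via the Markov property together with the supermartingale estimate $\EE^x\big[e^{\lambda_0\|X_1\|_H^2}\big]\le e^{\lambda_0 c_1}e^{\lambda_0\|x\|_H^2}$; both variants rest on exactly the same ingredients and are equally valid.
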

\begin{proof} The proof is inspirited by Gourcy \cite{Gou1, Gou2}.  Take
\Be\label{e: compact set}
K:=\left\{x\in V: \|x\|_V\le M\right\},
\Ee
where the constant $M$ will be fixed later. Since the embedding $V\subset H$ is compact, $K$ is a compact subset in $H$.

The definition of the occupation measure implies that
$$
\PP^{\nu}(\tau_K^{(1)}>n)\le \PP^{\nu}\left(\cL_n(K)\le  \frac1n\right)=\PP^{\nu}\left(\cL_n(K^c)\ge 1-\frac1n\right).
$$
With our choice for $K$, we have $ \|x\|_V\ge M1_{K^c}(x)$. Hence, for any fixed $\lambda_0\in (0,-\frac{c_3}{2B})$, we obtain by Chebychev's inequality
\begin{align*}
\PP^{\nu}(\tau_K^{(1)}>n)\le & \PP^{\nu}\left(\cL_n(\|x\|_V^r)\ge M^r \left(1-\frac1n\right)\right)\\
\le & \exp\left(-\frac{n \lambda_0 c_2 M^r}{2}\left(1-\frac1n\right) \right)\EE^{\nu}\left[\exp\left(\frac{\lambda_0 c_2}{2} \int_0^n \|X_s\|_V^r\dif s\right) \right].
\end{align*}
Integrating \eqref{eq exp1} w.r.t. $\nu(\dif x)$ and plugging it into the above estimate yields
$$
\PP^{\nu}(\tau_K^{(1)}>n)\le \nu (e^{\lambda_0\|\cdot\|_H^2})\exp\left\{-n\lambda_0C\right\}, \ \ \ \forall n\ge2,
$$
where $C:=\frac{ c_2}{4}M^r-c_1$.

Let $\lambda>0$ be fixed. By the  formula of  the integration by parts, we have
\begin{align*}
\EE^{\nu}\left[e^{\lambda \tau_K^{(1)}}\right]=&1+\int_0^{+\infty}\lambda e^{\lambda t}\PP^{\nu}(\tau_K^{(1)}>t)\dif t\\
\le & e^{2\lambda}+\sum_{n\ge2}\lambda e^{\lambda(n+1)}\PP^{\nu}(\tau_K^{(1)}>n)\\
\le & e^{2\lambda}\left(1+\lambda \nu (e^{\lambda_0\|\cdot\|_H^2})\sum_{n\ge2}e^{-n(\lambda_0C-\lambda)} \right).
\end{align*}
Now, we can choose $M$ such that $\lambda_0C-\lambda>0$ in the definition \eqref{e: compact set} of $K$. Then, taking the supremum over $\{\nu=\delta_x, x\in K\}$, we get
$$
\sup_{x\in K} \EE^x\left[e^{\lambda \tau_K^{(1)}}\right]\le e^{2\lambda}\left(1+\lambda e^{\frac{\lambda_0 M^2}{\eta^2}}  \sum_{n\ge2}e^{-n(\lambda_0C-\lambda)} \right)<\infty,
$$
where  \eqref{eq compact} is used. Thus,  the first inequality in \eqref{lem condition 2} holds true. We obtain the second inequality in \eqref{lem condition 2} in the same way: since $\tau_K\le \tau_K^{(1)}$, we have
\begin{align*}
\sup_{\nu\in \cM_{\lambda_0, L}}\EE^{\nu}\left[e^{\lambda \tau_K}\right]\le &
\sup_{\nu\in \cM_{\lambda_0, L}}\EE^{\nu}\left[e^{\lambda \tau_K^{(1)}}\right]\\
\le &e^{2\lambda}\left(1+ \lambda L \sum_{n\ge2}e^{-n(\lambda_0C-\lambda)} \right)\\
<&\infty.
\end{align*}
The proof is complete.
\end{proof}

\subsection{The proof of   Theorem \ref{thm main 2}}

\subsubsection{Some estimates} \label{section recur}

In this part, we will give some prior estimates, which are necessary for verifying the hyper-exponential recurrence condition \eqref{condition 2}.

Recall the following inequalities (see \cite{Xu13}):
\Be \label{e:eAEst}
\|A^{\sigma} e^{-At}\|_H \le C_\sigma t^{-\sigma}, \ \ \ \ \ \forall \ \sigma>0 \ \ \ \forall \ t>0;
\Ee
\Be \label{e:L4}
\|x\|^4_{L^4}\le \|x\|_V^2\cdot\|x\|_H^2\le \|x\|_V^4, \ \ \ \ \  \forall  \ x\in V;
\Ee
\Be \label{e:3H}
\|x^3\|_{H}\le C\|A^{\frac14}x\|_H^2\cdot\|x\|_H \le C\|x\|_V\|x\|_H^2 , \ \ \ \ \  \forall  \ x\in V.
\Ee

\vskip0.3cm
Let $Z_t$ be the following  Ornstein-Uhlenbeck process:
\Be\label{e:OUAlp}
\dif Z_t+A Z_t \dif t= \dif L_t, \ \ \ Z_0=0,
\Ee
where $L_t=\sum_{k \in \Z_*} \beta_k l_k(t) e_k$ is the $\alpha$-stable process defined in Eq. \eqref{e:XEqn}. It is well known that
$$
Z_t=\int_0^t e^{-A(t-s)} \dif L_s=\sum_{k \in \Z_{*}} z_{k}(t) e_k,
$$
where $$z_{k}(t)=\int_0^t e^{-\gamma_k(t-s)}
\beta_k \dif l_k(s).$$
\vskip0.3cm
The following maximal inequality can be found  in \cite[Lemma 3.1]{Xu13}.
\begin{lem} \label{l:ZEst}  For  any $T>0, 0 \leq \theta<\beta-\frac 1{2 \alpha}$ and any $0<p<\alpha$, we have
$$
\E \left[\sup_{0 \leq t \le T}\|A^\theta Z_t\|^p_{H}\right] \le CT^{p/\alpha},
$$
where $C$ depends on $\alpha,\theta, \beta, p$.
\end{lem}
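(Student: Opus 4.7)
My plan is to diagonalize the stochastic convolution in the Fourier basis $\{e_k\}_{k\in\Z_*}$, reduce the supremum to a one-dimensional maximal inequality for each scalar Ornstein--Uhlenbeck component $z_k(t)=\beta_k\int_0^t e^{-\gamma_k(t-s)}\,\dif l_k(s)$, and then sum the resulting series.

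First, I would write $\|A^\theta Z_t\|_H^2=\sum_k \gamma_k^{2\theta}|z_k(t)|^2$ and exploit that $p<\alpha<2$ to invoke subadditivity of $x\mapsto x^{p/2}$, giving
$$\sup_{0\le t\le T}\|A^\theta Z_t\|_H^p \le \sum_{k\in\Z_*}\gamma_k^{p\theta}\sup_{0\le t\le T}|z_k(t)|^p.$$
For each $k$ I would then apply integration by parts,
$$z_k(t)=\beta_k l_k(t)-\beta_k\gamma_k\int_0^t e^{-\gamma_k(t-s)}l_k(s)\,\dif s,$$
to get the crude uniform bound $\sup_{t\le T}|z_k(t)|\le 2|\beta_k|\sup_{t\le T}|l_k(t)|$. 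The key scalar input is the classical maximal inequality $\EE[\sup_{t\le T}|l_k(t)|^p]\le CT^{p/\alpha}$ for the symmetric $\alpha$-stable Lévy process $l_k$, which for $p\ge 1$ follows from Doob combined with the self-similarity $l_k(t)\stackrel{d}{=}t^{1/\alpha}l_k(1)$, and for $p<1$ from Jensen applied to the case $p=1$ (available since $\alpha>1$). Using $|\beta_k|\le C_2\gamma_k^{-\beta}$ and $\gamma_k\asymp|k|^2$, this would yield
$$\EE\Bigl[\sup_{t\le T}\|A^\theta Z_t\|_H^p\Bigr]\le CT^{p/\alpha}\sum_{k\in\Z_*}\gamma_k^{p(\theta-\beta)}\asymp CT^{p/\alpha}\sum_{k\in\Z_*}|k|^{2p(\theta-\beta)}.$$

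The main obstacle is that this series only converges when $2p(\beta-\theta)>1$, i.e.\ $p>1/\bigl(2(\beta-\theta)\bigr)$. The hypothesis $\theta<\beta-1/(2\alpha)$ guarantees $1/\bigl(2(\beta-\theta)\bigr)<\alpha$, so there exists at least one $p'\in\bigl(1/(2(\beta-\theta)),\alpha\bigr)$ for which the scheme closes, but it does not directly cover small values of $p$. To handle the remaining range I would add a final Jensen step: for every $0<p\le p'<\alpha$,
$$\EE\bigl[\sup_{t\le T}\|A^\theta Z_t\|_H^p\bigr]\le\Bigl(\EE\bigl[\sup_{t\le T}\|A^\theta Z_t\|_H^{p'}\bigr]\Bigr)^{p/p'}\le C^{p/p'}T^{p/\alpha},$$
which preserves the exponent $p/\alpha$ exactly. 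The subtle point is precisely this exponent matching: the mode-by-mode bound does not exploit any damping from $\gamma_k$, so summability in $k$ must be bought by taking $p$ close to $\alpha$ and then transferred to small $p$ via Jensen without losing the $T^{p/\alpha}$ rate.
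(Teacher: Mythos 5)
Your proof is correct, but note the situation on the paper's side: the paper does not prove Lemma \ref{l:ZEst} at all — it imports it verbatim from \cite[Lemma 3.1]{Xu13}. Your argument is essentially a reconstruction of the standard proof in that reference: the same mode-by-mode reduction, the integration-by-parts bound $\sup_{t\le T}|z_k(t)|\le 2|\beta_k|\sup_{t\le T}|l_k(t)|$, Doob's maximal inequality combined with self-similarity for the scalar $\alpha$-stable processes, subadditivity of $x\mapsto x^{p/2}$ (valid since $p<\alpha<2$) to sum over modes, and a final Jensen step to pass from an exponent near $\alpha$ down to small $p$ while keeping the rate $T^{p/\alpha}$. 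The series $\sum_k \gamma_k^{p'(\theta-\beta)}\asymp\sum_k|k|^{-2p'(\beta-\theta)}$ converges exactly when $2p'(\beta-\theta)>1$, and the hypothesis $\theta<\beta-\frac{1}{2\alpha}$ guarantees $1/(2(\beta-\theta))<\alpha$, so your scheme closes. Two minor points of hygiene, neither a gap: Doob's $L^p$ maximal inequality fails at $p=1$, so the parenthetical claim that the scalar bound for $p\ge 1$ ``follows from Doob'' should read $p>1$ — harmless, since the global argument only needs the bound for a single auxiliary exponent; and the quantifiers should be ordered so that the auxiliary exponent is chosen after $p$, i.e. $p'\in\left(\max\left\{p,\,1,\,1/(2(\beta-\theta))\right\},\,\alpha\right)$, which is a nonempty interval because all three quantities are strictly less than $\alpha$.
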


\vskip0.3cm

Let $Y_t:=X_t-Z_t$. Then $Y_t$ satisfies the following equation:
\begin{equation}\label{e:Y}
\dif Y_t+A Y_t\dif t=N(Y_t+Z_t)\dif t, \ \ \ \ Y_0=x.
\end{equation}

\begin{lem}\label{l:Y}  For all $T>0$, we have
 \begin{equation}  \label{e:YtEstZt}
\sup_{t\in [T/2,T]}\|Y_t\|_H\le C(T)\left(1+\sup_{0\le t\le T}\|Z_t\|_{V} \right),
\end{equation}
 where the constant $C(T)$     does not depend on the initial value $Y_0=x$.
\end{lem}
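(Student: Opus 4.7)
The plan is to derive a pointwise-in-$\omega$ differential inequality for $w(t) := \|Y_t\|_H^2$ that exhibits a $-w^2$ dissipation, then apply an ODE comparison that produces an a priori bound independent of the initial value $x$ for $t \ge T/2$. Equation \eqref{e:Y} is deterministic once the path of $Z$ is fixed, so the whole argument is carried out for a frozen sample $\omega$.

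First, I would take the $H$-inner product of \eqref{e:Y} with $Y_t$, obtaining
\[
\tfrac{1}{2}\tfrac{d}{dt}\|Y_t\|_H^2 + \|Y_t\|_V^2 = \langle N(Y_t + Z_t), Y_t\rangle_H.
\]
Writing $u = Y_t + Z_t$, using $N(u) = u - u^3$, and substituting $Y_t = u - Z_t$ inside the inner product yields
\[
\langle N(u), u - Z_t\rangle_H = \|u\|_H^2 - \|u\|_{L^4}^4 - \langle u, Z_t\rangle_H + \langle u^3, Z_t\rangle_H.
\]
The crucial term is the last one, controlled via the 1D Sobolev embedding $V = D(A^{1/2}) \hookrightarrow L^\infty(\T)$: $|\langle u^3, Z_t\rangle| \le \|Z_t\|_{L^\infty}\|u\|_{L^3}^3 \le \tfrac12\|u\|_{L^4}^4 + C\|Z_t\|_V^4$ by Young's inequality with a small parameter together with Hölder's inequality ($|\T|=1$). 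Combining this with the elementary $\|Y_t\|_{L^4}^4 \le 8(\|u\|_{L^4}^4 + \|Z_t\|_{L^4}^4)$ and with $\|Y_t\|_H^4 \le \|Y_t\|_{L^4}^4$ (from $|\T| = 1$), and absorbing the remaining $\|Y_t\|_H^2$ and $\langle u, Z_t\rangle$ via Young's inequality, I would arrive at
\[
\tfrac{d}{dt}\|Y_t\|_H^2 \le -c_1\|Y_t\|_H^4 + C_1\bigl(1 + \|Z_t\|_V^4\bigr)
\]
for positive constants $c_1, C_1$ that depend on neither $x$ nor $T$.

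Setting $\bar K_T := C_1\bigl(1 + \sup_{0 \le s \le T}\|Z_s\|_V^4\bigr)$ and $w^\star := 2\sqrt{\bar K_T/c_1}$, the inequality $\dot w \le -c_1 w^2 + \bar K_T$ is analyzed as follows: on the set $\{w > w^\star\}$ it reduces to $\dot w \le -(c_1/2) w^2$, so comparison with the pure quadratic ODE gives $w(t) \le 2/(c_1 t)$ regardless of $w(0)$; and the set $\{w \le w^\star\}$ is forward-invariant because $\dot w < 0$ on its boundary. Consequently, for every $t \in [T/2, T]$,
\[
w(t) \le \max\Bigl\{w^\star,\ \tfrac{4}{c_1 T}\Bigr\} \le C(T)\Bigl(1 + \sup_{0\le s\le T}\|Z_s\|_V^2\Bigr),
\]
and taking square roots gives \eqref{e:YtEstZt}. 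The main obstacle is the careful bookkeeping when expanding $\langle (Y_t+Z_t)^3, Y_t\rangle$: every $Y_t$--$Z_t$ cross term must either be absorbed into the dissipative $-\|u\|_{L^4}^4$ or shifted to a polynomial of $\sup_{0 \le s \le T}\|Z_s\|_V$. The independence of $C(T)$ from the initial value $x$ is precisely the pay-off of the quadratic dissipation intrinsic to the strongly coercive Ginzburg–Landau nonlinearity $N(u) = u - u^3$.
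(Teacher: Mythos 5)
Your proposal is correct and follows essentially the same route as the paper: both derive the pathwise differential inequality $\frac{\dif}{\dif t}\|Y_t\|_H^2\le -c\|Y_t\|_H^4+C\left(1+\|Z_t\|_V^4\right)$ from the energy identity and the coercivity of $N(u)=u-u^3$, and then use an ODE comparison whose quadratic dissipation erases the dependence on the initial value for $t\ge T/2$. The only (cosmetic) differences are that the paper absorbs the cross terms via term-by-term Young inequalities on $\langle y,z^3\rangle$, $\langle y^2,z^2\rangle$, $\langle y^3,z\rangle$ instead of your $V\hookrightarrow L^\infty$ estimate of $\langle u^3,Z_t\rangle$, and it solves the Riccati comparison equation explicitly rather than using your forward-invariance/decay dichotomy.
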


\begin{proof}

By the chain rule, we obtain that
\begin{equation}\label{e:Y1}
\frac{\dif \|Y_t\|_H^2}{\dif t}+2\|Y_t\|_V^2=2\langle Y_t, N(Y_t+Z_t) \rangle.
\end{equation}
Using the following Young inequalities: for any $y,z\in L^4(\T; \R)$ and $C_1>0$, there exists $C_2>0$ satisfying that
\begin{equation*}
\begin{split}
& |\Ll  y, z^3 \Rr_H|=\left|\int_{\mathbb T} y(\xi) z^3 (\xi) \dif \xi\right| \le \frac{\int_{\mathbb T} y^4(\xi) \dif \xi}{C_1}+ C_2\int_{\mathbb T} z^{4}(\xi) \dif \xi, \\
& |\Ll  y^2, z^2 \Rr_H|=\left|\int_{\mathbb T} y^2(\xi) z^2 (\xi) \dif \xi\right| \le \frac{\int_{\mathbb T} y^4(\xi) \dif \xi}{C_1}+C_2\int_{\mathbb T} z^4(\xi) \dif \xi, \\
& |\Ll  y^3, z  \Rr_H|=\left|\int_{\mathbb T} y^3(\xi) z(\xi) \dif \xi\right| \le \frac{ \int_{\mathbb T} y^4(\xi) \dif \xi}{C_1}+ C_2\int_{\mathbb T} z^4(\xi) \dif \xi,
\end{split}
\end{equation*}
and using    H\"older inequality and the elementary inequality $2\sqrt a\le a/b+b$ for all $a,b>0$, we  obtain that there exists a constant $C\ge1$ satisfying that
$$
2\langle Y_t, N(Y_t+Z_t) \rangle\le -\|Y_t\|_{L^4}^4+C(1+\|Z_t\|_{L^4}^4).
$$
This  inequality, together with  Eq. \eqref{e:L4},  Eq. \eqref{e:Y1} and   H\"older inequality, implies that
\begin{equation}\label{e:Y2}
\frac{\dif \|Y_t\|_H^2}{\dif t}+2\|Y_t\|_V^2\le -\|Y_t\|_{H}^4+C\left(1+\|Z_t\|_V^4\right).
\end{equation}

For any $t\ge0$, denote $$h(t):= \|Y_t\|_H^2,\ \  \ K_T:=\sup_{0\le t\le T}\sqrt{C(1+\|Z_t\|_V^4)}\ge 1.$$
 By Eq. \eqref{e:Y2}, we have
$$
\frac{\dif h(t)}{\dif t} \le -h^2(t)+K_T^2, \ \ \ \forall t\in[0,T],
$$
with the initial value $h(0)=\|x\|_H^2\ge0$.

By the comparison theorem (e.g., the deterministic case of   \cite[Chapter VI, Theorem 1.1]{IW}),  we obtain that
\begin{equation}\label{e:h g}
h(t)\le g(t), \ \ \ \ \ \ \forall t\in[0,T],
\end{equation}
where the function $g$ solves the following equaiton
\begin{equation}\label{e:g0}
\frac{\dif g(t)}{\dif t}= -g^2(t)+K_T^2, \ \ \ \forall t\in[0,T],
\end{equation}
with the initial value $g(0)=h(0)$. The solution of  Eq. \eqref{e:g0} is
\begin{equation*}
g(t)=K_T+2K_T\left( \frac{g(0)+K_T}{g(0)-K_T}e^{2K_T t}-1\right)^{-1}, \ \  \ \forall t\in[0,T],
\end{equation*}
where  it is understood that  $g(t)\equiv K_T$ when  $g(0)=K_T$.  It is easy to show that
 for any initial value $g(0)$, we have
 $$
 g(t)\le K_T\left(1+2(   e^{T}-1)^{-1}\right), \ \ \ \forall t\in[T/2,T].
  $$
  This inequlity, together with Eq. \eqref{e:h g} and the definition of $K_T$, immediately implies the required estimate \eqref{e:YtEstZt}.

         The proof is complete.
\end{proof}

\vskip0.3cm

\begin{lem}\label{l:Y2}
For all $T>0$, $\delta \in (0,1/2)$ and  $p \in (0,\alpha/4)$, we have
$$
\E^{x}\left[\|Y_{T}\|^p_{H_\delta} \right]\le C_{T,\delta,p},
$$
where the constant $C_{T,\delta, p}$  does not depend on the initial  value $Y_0=x$.
\end{lem}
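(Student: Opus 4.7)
\medskip
\noindent\textbf{Proof proposal for Lemma~\ref{l:Y2}.}
My plan is to run the mild formulation of \eqref{e:Y} on the interval $[T/2,T]$ rather than $[0,T]$, so that the ``initial'' data $Y_{T/2}$ is already controlled uniformly in $x$ by Lemma~\ref{l:Y}. Writing
\[
Y_T \;=\; e^{-A(T/2)}\,Y_{T/2} \;+\; \int_{T/2}^T e^{-A(T-s)}\,N(Y_s+Z_s)\,\dif s,
\]
I apply $A^\delta$ to both sides and use the smoothing estimate \eqref{e:eAEst} to get
\[
\|Y_T\|_{H_\delta} \;\le\; C_\delta\,(T/2)^{-\delta}\,\|Y_{T/2}\|_H \;+\; C_\delta\int_{T/2}^T (T-s)^{-\delta}\,\|N(Y_s+Z_s)\|_H\,\dif s.
\]
Since $N(u)=u-u^3$ and \eqref{e:3H} gives $\|u^3\|_H\le C\|u\|_V\|u\|_H^2$, the integrand splits into a linear part bounded by $\|Y_s+Z_s\|_H$ and a cubic part bounded by $C\|Y_s+Z_s\|_V\cdot\|Y_s+Z_s\|_H^2$.

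Set $M_T:=\sup_{0\le t\le T}\|Z_t\|_V$. By Lemma~\ref{l:Y}, $\sup_{s\in[T/2,T]}\|Y_s\|_H\le C(T)(1+M_T)$ uniformly in $x$, and integrating the energy inequality \eqref{e:Y2} from $T/2$ to $T$ yields
\[
\int_{T/2}^T \|Y_s\|_V^2\,\dif s \;\le\; \tfrac{1}{2}\|Y_{T/2}\|_H^2 + C\!\int_{T/2}^T(1+\|Z_s\|_V^4)\,\dif s \;\le\; C(T)(1+M_T^4).
\]
The dangerous cubic term then collapses via Cauchy--Schwarz (using $\delta<1/2$ so that $(T-s)^{-2\delta}$ is integrable on $[T/2,T]$):
\[
\int_{T/2}^T (T-s)^{-\delta}\|Y_s+Z_s\|_V\,\dif s \;\le\; C(T,\delta)\Bigl(M_T + \bigl(\textstyle\int_{T/2}^T\|Y_s\|_V^2\,\dif s\bigr)^{1/2}\Bigr) \;\le\; C(T,\delta)(1+M_T^2).
\]
Combining everything, I expect a pointwise bound of the form
\[
\|Y_T\|_{H_\delta} \;\le\; C(T,\delta)\,(1+M_T)\;+\;C(T,\delta)(1+M_T)^2(1+M_T^2) \;\le\; C(T,\delta)(1+M_T^4).
\]

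Taking $p$-th powers and expectations reduces the problem to bounding $\E[M_T^{4p}]$, and here Lemma~\ref{l:ZEst} (applied with $\theta=1/2$, legal because $\beta>\tfrac12+\tfrac1{2\alpha}$) gives finite $q$-moments of $M_T$ for every $q<\alpha$. The assumption $p<\alpha/4$ is exactly what is needed so that $4p<\alpha$, producing the final bound $C_{T,\delta,p}$ independent of $x$. The main obstacle is the bookkeeping: each appearance of the cubic nonlinearity increases the power of $M_T$, and one must verify that the worst term $(1+M_T)^2\cdot(1+M_T^2)$ arising from the $L^\infty_s\|Y\|_H^2\cdot L^2_s\|Y\|_V$ split really does stay at the fourth power rather than escaping past $\alpha/4$. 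A secondary subtlety is that the smoothing factor $(T-s)^{-\delta}$ must land in $L^2_s$ to justify the Cauchy--Schwarz step, which is precisely why the constraint $\delta<1/2$ is sharp here.
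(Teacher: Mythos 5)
Your proposal is correct and follows essentially the same route as the paper: the mild formulation started from $Y_{T/2}$, the smoothing estimate \eqref{e:eAEst}, the uniform $H$-bound on $[T/2,T]$ from Lemma~\ref{l:Y}, the integrated energy inequality \eqref{e:Y2} to control $\int_{T/2}^T\|Y_s\|_V^2\,\dif s$, Cauchy--Schwarz against $(T-s)^{-2\delta}$ (using $\delta<1/2$), and finally Lemma~\ref{l:ZEst} with $\theta=1/2$ and $4p<\alpha$. The resulting fourth-power bound in $\sup_t\|Z_t\|_V$ is exactly the one the paper obtains.
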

\begin{proof} Since
$$Y_{T}=e^{-AT/2} Y_{T/2}+\int_{T/2}^T e^{-A(T-s)} N(Y_s+Z_s) \dif s,$$
for any $\delta \in (0,1/2)$, by the inequalities \eqref{e:eAEst}-\eqref{e:3H} and Lemma \ref{l:Y}, there exists a constant $C=C_{T,\delta}$ (whose value may be different from line to line by convention) satisfied  that
\begin{align*}
\|Y_{T}\|_{H_\delta}
\le & C \|Y_{T/2}\|_H+C\int_{T/2}^T (T-s)^{-\delta} \|N(Y_s+Z_s)\|_H \dif s\notag \\
  \le& C \|Y_{T/2}\|_H+C\int_{T/2}^T (T-s)^{-\delta} (\|Y_s\|_H+\|Z_s\|_H+ \|Y_s^3\|_H+\|Z_s^3\|_H) \dif s \notag \\
 \le & C \|Y_{T/2}\|_H+C\int_{T/2}^T (T-s)^{-\delta} (\|Y_s\|_H+\|Z_s\|_V+\|Y_s\|_{V} \|Y_s\|^2_H+\|Z_s\|^3_V) \dif s \notag\\
  \le & C \left(1+\sup_{0 \le t \le T} \|Z_t\|^3_V\right)+C\int_{T/2}^T (T-s)^{-\delta} \|Y_s\|_{V} \|Y_s\|^2_H \dif s.
\end{align*}
Next, we estimate  the last term in above inequality: by Eq. \eqref{e:Y2} and Lemma \ref{l:Y} again, we have
\begin{align*}
&\int_{T/2}^T (T-s)^{-\delta} \|Y_s\|_{V} \|Y_s\|^2_H \dif s\\
 \le &C\left(1+\sup_{0 \le t \le T}\|Z_t\|^2_V\right)\int_{T/2}^T (T-s)^{-\delta} \|Y_s\|_{V} \dif s \\
\le& C\left(1+\sup_{0 \le t \le T}\|Z_t\|^2_V\right)\left(\int_{T/2}^T (T-s)^{-2\delta} \dif s\right)^{\frac 12} \left(\int_{T/2}^T\|Y_s\|^2_{V} \dif s\right)^{\frac 12}  \\
\le& C\left(1+\sup_{0 \le t \le T}\|Z_t\|^2_V\right) \left(\|Y_{T/2}\|^2_H+\int_{T/2}^T (1+\|Z_s\|^4_V)\dif s\right)^{\frac 12} \\
\le &  C\left(1 +\sup_{0 \le t \le T} \|Z_t\|^4_V\right).
\end{align*}
Hence, by Lemma \ref{l:ZEst}, we obtain that for any $p \in (0,\alpha/4)$,
$$
\E^{x}\left[\|Y_{T}\|^p_{H_\delta}\right] \le C_{T,\delta,p}.
$$
The proof is complete.
\end{proof}

By Lemma  \ref{l:ZEst} and Lemma  \ref{l:Y2},  we  obtain that
\begin{lem}\label{l: X}
For all $T>0$,   $\delta \in (0,1/2)$ and $p \in (0,\alpha/4)$, we have
$$
\E^{x}\left[\|X_{T}\|^p_{H_\delta}\right] \le C_{T,\delta,p}
$$
where the constant $C_{T,\delta,p}$   does not depend on the initial  value $X_0=x$.
\end{lem}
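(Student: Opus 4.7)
The plan is to write $X_T = Y_T + Z_T$ (this is the decomposition used throughout the preceding analysis, where $Y$ solves \eqref{e:Y} and $Z$ is the Ornstein--Uhlenbeck process from \eqref{e:OUAlp}) and bound each piece separately in the $H_\delta$-norm. Since $\|\cdot\|_{H_\delta} = \|A^\delta \cdot\|_H$ is a seminorm, we have
\[
\|X_T\|_{H_\delta}^p \le 2^{(p-1)^+}\bigl(\|Y_T\|_{H_\delta}^p + \|Z_T\|_{H_\delta}^p\bigr),
\]
so taking $\E^x$ reduces the lemma to two independent moment estimates.

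For the $Y_T$-term, Lemma \ref{l:Y2} already gives exactly the required bound: for every $\delta \in (0,1/2)$ and $p \in (0,\alpha/4)$ one has $\E^x[\|Y_T\|_{H_\delta}^p] \le C_{T,\delta,p}$ with $C_{T,\delta,p}$ independent of the initial datum. For the $Z_T$-term I would apply Lemma \ref{l:ZEst} with $\theta = \delta$: this requires $\delta < \beta - \tfrac{1}{2\alpha}$, which holds automatically since our hypothesis $\beta > \tfrac12 + \tfrac{1}{2\alpha}$ gives $\beta - \tfrac{1}{2\alpha} > \tfrac12 > \delta$, and Lemma \ref{l:ZEst} is valid for any $p \in (0,\alpha)$, which comfortably contains $(0,\alpha/4)$. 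Since $Z$ is driven by noise alone and does not depend on $x$, this contribution is a deterministic constant $C_{T,\delta,p}$.

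Combining the two estimates yields $\E^x[\|X_T\|_{H_\delta}^p] \le C_{T,\delta,p}$ with a constant independent of $x$, as claimed. There is no real obstacle here: the substantive analytic work (the smoothing estimate through the nonlinearity $N$) has been carried out in Lemma \ref{l:Y2}, and the stochastic convolution bound is Lemma \ref{l:ZEst}. The only mild point to keep in mind is choosing the constant $2^{(p-1)^+}$ in the elementary $(a+b)^p$ inequality so that the argument covers both the case $p<1$ (where the exponent is subadditive) and $p \ge 1$; this is routine and absorbed into $C_{T,\delta,p}$.
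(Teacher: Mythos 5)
Your proof is correct and is exactly the paper's argument: the paper derives Lemma \ref{l: X} directly from Lemma \ref{l:Y2} (for $Y_T$) and Lemma \ref{l:ZEst} (for $Z_T$) via the decomposition $X_T=Y_T+Z_T$, just as you do, and your verification that $\delta<\beta-\tfrac{1}{2\alpha}$ and $p\in(0,\alpha/4)\subset(0,\alpha)$ fills in the only details the paper leaves implicit.
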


\subsubsection{The hyper-exponential Recurrence}

In this part, we will verify the hyper-exponential recurrence condition \eqref{condition 2}.
\vskip0.3cm

For any $\delta\in(0,1/2), M>0$, define the hitting time  of $\{X_n\}_{n\ge1}$:
\begin{equation}
\tau_M=\inf\{k\ge1:\|X_{k}\|_{H_{\delta}}\le M\}.
\end{equation}
 Let
 $$
 K:=\{x\in H_{\delta}: \|x\|_{H_{\delta}}\le M\}.
 $$
 Clearly, $K$ is compact in $H$. Recall the definitions of  $\tau_K$ and $\tau_K^{(1)}$ in \eqref{stopping time}. It is obvious  that
\begin{equation}
\tau_K\le \tau_M,\ \ \ \ \ \tau_K^{(1)}\le \tau_M.
\end{equation}
This fact, together with the following important theorem,  implies the hyper-exponential recurrence condition  \eqref{condition 2}.

\begin{theorem}\label{thm hyper-exp 2} For any $\lambda>0$,  there exists $M=M_{\lambda, \delta}$  such that
$$
\sup_{\nu\in\mathcal M_1(H)}\E^{\nu}[e^{\lambda\tau_M}]<\infty.
$$
\end{theorem}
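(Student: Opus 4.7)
The plan is to adapt the argument of Theorem \ref{thm hyper-exp} to the present setting, replacing the role of the sequence $\{t_n\}$ and the $V$-norm estimate coming from Lemma \ref{lem expect bound} with the uniform-in-initial-condition $H_\delta$-moment estimate at integer times provided by Lemma \ref{l: X}. Since $\{X_n\}_{n\ge 1}$ is a discrete-time Markov chain and $K = \{x \in H_\delta : \|x\|_{H_\delta} \le M\}$ is compact in $H$, the scheme of Theorem \ref{thm hyper-exp} transcribes almost verbatim.

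\textbf{Step 1: Uniform one-step moment estimate.} Fix $\delta \in (0,1/2)$ and apply Lemma \ref{l: X} with $T = 1$ and some $p \in (0, \alpha/4)$, producing a constant $C_{1,\delta,p}$ with
$$\E^x\bigl[\|X_1\|_{H_\delta}^p\bigr] \le C_{1,\delta,p} \quad \text{for every } x \in H.$$
The essential point is that this bound is independent of the starting point, so by the Markov property at time $k-1$,
$$\E^\nu\bigl[\|X_k\|_{H_\delta}^p \,\big|\, \mathcal F_{k-1}\bigr] = \E^{X_{k-1}}\bigl[\|X_1\|_{H_\delta}^p\bigr] \le C_{1,\delta,p}$$
for every $k \ge 1$ and every $\nu \in \mathcal M_1(H)$.

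\textbf{Step 2: Geometric tail of $\tau_M$.} Define $B_n := \{\|X_k\|_{H_\delta} > M : k=1,\ldots,n\} = \{\tau_M > n\}$. Conditioning on $\mathcal F_{n-1}$ and applying Chebychev's inequality together with Step 1 gives
$$\PP^\nu(B_n) = \E^\nu\bigl[\one_{B_{n-1}} \PP(\|X_n\|_{H_\delta} > M \mid \mathcal F_{n-1})\bigr] \le \PP^\nu(B_{n-1}) \cdot \frac{C_{1,\delta,p}}{M^p}.$$
An induction yields $\PP^\nu(\tau_M > n) \le \bigl(C_{1,\delta,p}/M^p\bigr)^n$ uniformly in $\nu \in \mathcal M_1(H)$.

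\textbf{Step 3: Exponential integrability.} Integration by parts as in Theorem \ref{thm hyper-exp} gives
$$\E^\nu\bigl[e^{\lambda \tau_M}\bigr] \le \sum_{n \ge 0} \lambda e^{\lambda(n+1)} \left(\frac{C_{1,\delta,p}}{M^p}\right)^n,$$
which is finite as soon as $M^p > C_{1,\delta,p}\, e^\lambda$, i.e.\ $M > (C_{1,\delta,p}\, e^\lambda)^{1/p}$. Choosing $M = M_{\lambda,\delta}$ in this range and taking the supremum over $\nu$ completes the argument.

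\textbf{Main obstacle.} The real work has already been done in Section \ref{section recur}: Lemmas \ref{l:Y}, \ref{l:Y2} and \ref{l: X} establish the uniform-in-$x$ bound $\E^x\|X_T\|_{H_\delta}^p \le C_{T,\delta,p}$, which in turn rests on the dissipative mechanism by which the cubic term $-x^3$ absorbs the effect of the heavy-tailed $\alpha$-stable driving noise (via the Ornstein--Uhlenbeck decomposition $X = Y + Z$ and the maximal inequality of Lemma \ref{l:ZEst}). Once that estimate is in hand, the hyper-exponential recurrence is a direct transcription of the argument used for Theorem \ref{thm hyper-exp}; the only modification is cosmetic, namely that the bound on $\PP^\nu(\tau_M > n)$ decays like $M^{-p}$ rather than $M^{-1}$, reflecting the fact that only fractional moments $p < \alpha/4$ are available under the $\alpha$-stable noise.
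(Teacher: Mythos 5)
Your proposal is correct and follows exactly the route the paper takes: the paper's own proof of Theorem \ref{thm hyper-exp 2} consists of the single remark that, given the uniform estimate of Lemma \ref{l: X}, the argument is the same as that of Theorem \ref{thm hyper-exp}, and your Steps 1--3 are precisely that transcription (with the $p$-th moment Chebychev bound $C_{1,\delta,p}/M^p$ replacing $C/M$). Nothing is missing.
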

\begin{proof}  Since we have the uniform estimate in  Lemma \ref{l: X}, the proof of this theorem is the same as that in Theorem \ref{thm hyper-exp}. We omit the detail here.  \end{proof}




\vskip0.5cm
\noindent{\bf Acknowledgments}: We would like to gratefully thank Armen Shirikyan for pointing out the key estimate \eqref{e:YtEstZt} for us. R. Wang thanks the  Faculty of Science and Technology, University of Macau, for finance support and hospitality.    He was supported by Natural Science Foundation of China 11301498, 11431014 and the Fundamental Research Funds for the Central Universities WK0010000048. J. Xiong was  supported by Macao Science and Technology Fund FDCT 076/2012/A3 and Multi-Year Research Grants of the University of Macau Nos. MYRG2014-00015-FST and MYRG2014-00034-FST. L. Xu is supported by the grants: MYRG2015-00021-FST and Science and Technology Development Fund, Macao S.A.R FDCT 049/2014/A1. All of the three authors are supported by the research project RDAO/RTO/0386-SF/2014.

\section{Appendix}

For any $p>1$, consider the following equation
\begin{equation}\label{e:g}
\frac{\dif g(t)}{\dif t}= -C_1 g^p(t)+C_2 , \ \ \ \forall t\ge0,
\end{equation}
where $C_i>0, i=1,2$,  the initial value $g(0)\ge0$.  Obviously,  $g$  is monotone and $\lim_{t\rightarrow \infty}g(t)=(C_2/C_1)^{1/p}$. Furthermore, we have the following estimate uniformly over the initial value.

\begin{lem}\label{lem p}
For any $p>1$, there exists a constant $C(p,  C_1)$ such that for   any initial value $g(0)\ge0$, we have
\begin{equation}\label{eq p}
\sup_{t\ge 1} |g(t)|\le   C(p,  C_1)(1+ C_2).
\end{equation}

\end{lem}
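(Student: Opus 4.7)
The plan is to exploit the super-linear dissipative term $-C_1 g^p$, which dominates the constant source $C_2$ as soon as $g$ exceeds the equilibrium level $g_\star := (C_2/C_1)^{1/p}$. The two regimes to handle are: (i) $g(t)$ very large, where the decay is essentially autonomous and gives a bound depending only on $t$, and (ii) $g(t)$ already near the equilibrium, where a simple absorption argument gives a direct bound. I will glue them by a hitting-time argument.

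\textbf{Step 1: Isolate a clean decay regime.} Set the threshold $M_\star := (2C_2/C_1)^{1/p}$. As long as $g(t) \ge M_\star$ one has $C_1 g^p(t) \ge 2 C_2$, and hence
\begin{equation*}
g'(t) \;=\; -C_1 g^p(t) + C_2 \;\le\; -\tfrac{C_1}{2}\, g^p(t).
\end{equation*}
Let $\tau := \inf\{t \ge 0 : g(t) \le M_\star\}$ (with the convention $\tau = +\infty$ if the set is empty).

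\textbf{Step 2: Autonomous comparison on $[0,\tau)$.} On $[0,\tau)$, comparison with the explicit ODE $h'(t) = -\tfrac{C_1}{2} h^p(t)$ starting from $h(0) = g(0)$ yields $g(t) \le h(t)$, and the explicit formula gives
\begin{equation*}
h(t) \;=\; \Bigl(h(0)^{1-p} + \tfrac{C_1(p-1)}{2}\, t\Bigr)^{-1/(p-1)} \;\le\; \Bigl(\tfrac{C_1(p-1)}{2}\, t\Bigr)^{-1/(p-1)}.
\end{equation*}
The crucial point is that the right-hand side is independent of $g(0)$; in particular, for every $t\ge 1$ with $t<\tau$,
\begin{equation*}
g(t) \;\le\; \Bigl(\tfrac{C_1(p-1)}{2}\Bigr)^{-1/(p-1)} \;=:\; C(p,C_1).
\end{equation*}

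\textbf{Step 3: Absorption below $M_\star$.} At any point where $g(s) = M_\star$ one has $g'(s) = -C_2 < 0$, so $g$ cannot cross $M_\star$ from below; hence for all $t \ge \tau$, $g(t) \le M_\star$. Using $C_2^{1/p} \le 1 + C_2$ (considering separately $C_2 \le 1$ and $C_2 \ge 1$),
\begin{equation*}
M_\star \;=\; (2/C_1)^{1/p}\, C_2^{1/p} \;\le\; (2/C_1)^{1/p}\,(1+C_2).
\end{equation*}

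\textbf{Step 4: Conclude.} Combining Steps 2 and 3, for every $t \ge 1$,
\begin{equation*}
g(t) \;\le\; \max\bigl\{M_\star,\; C(p,C_1)\bigr\} \;\le\; \bigl[(2/C_1)^{1/p} + C(p,C_1)\bigr](1+C_2),
\end{equation*}
which is the desired bound \eqref{eq p}. No step looks like a serious obstacle; the only subtle point is handling the case $g(0) \le M_\star$ separately (then $\tau = 0$ and Step 3 applies directly), and checking that the explicit solution formula in Step 2 remains valid in the borderline case $g(0) = 0$ (where $g$ is non-decreasing and stays below $g_\star$, so the bound is trivial).
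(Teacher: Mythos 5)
Your proof is correct and follows essentially the same route as the paper's: both use the threshold $(2C_2/C_1)^{1/p}$, the halved dissipation $g'\le -\tfrac{C_1}{2}g^p$ above it, the explicit comparison solution giving a bound independent of $g(0)$ at $t=1$, and the fact that once $g$ drops to the threshold it stays below it (you argue via the sign of $g'$ there, the paper via monotonicity of solutions of the autonomous ODE). The only cosmetic difference is that the paper organizes the argument by a case split on $g(0)$ versus the equilibrium level rather than by a hitting time.
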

\begin{proof} We  shall divide the proof in the following two cases.
  \begin{itemize}
    \item[(1)]When $g(0)\in \left[0,(C_2/C_1)^{1/p}\right]$,   $g$ is increasing and its limit is $(C_2/C_1)^{1/p}$, which implies that $g(t)\le (C_2/C_1)^{1/p}$ for all $t\ge0$.
    \item[(2)]
      When $g(0)>(C_2/C_1)^{1/p}$,   $g$ is decreasing  in $[0,\infty)$.  Let
      $$\tau:=\inf\left\{t: g(t)\le (2C_2/C_1)^{1/p} \right\}.$$
\begin{itemize}
  \item[(2a)]
      If $\tau\le 1$, there exists a constant $t_0\le 1$ such that $g(t_0)\le (2C_2/C_1)^{1/p} $,      by the monotone of $g$, we know that
       $$  g(t)\le (2C_2/C_1)^{1/p}\ \ \ \ \text{for all } t\ge t_0.$$

  \item[(2b)] If $\tau> 1$,  $g(t)>(2C_2/C_1)^{1/p} $ for any $t\le 1$, and
    $$
    g'(t)\le -C_1 g^p(t)/2\ \ \  \ \text{for all } t\le 1.
         $$
  By the comparison's theorem and the monotone of $g$, we obtain that for any $t\ge1$,
  $$
  g(t)\le g(1)\le \left[g^{-p+1}(0)+(p-1)C_1/2\right]^{-\frac{1}{p-1}}\le \left[(p-1)C_1 /2\right]^{-\frac{1}{p-1}}.
  $$
  \end{itemize}
   \end{itemize}
   Above all, there exists a constant $C(p,  C_1)$ independent of $g(0)$ satisfying \eqref{eq p}.

  The proof is complete.
  \end{proof}


\bibliographystyle{amsplain}

\end{document}